\newtheorem{thm}{Theorem}[section]
\newtheorem{defi}[thm]{Definition}
\newtheorem{pro}[thm]{Proposition}
\newtheorem{cor}[thm]{Corollary}
\newtheorem{lem}[thm]{Lemma}
\newtheorem{rem}[thm]{Remark}
\newcommand{\R}{\mathbb{R}}
\newcommand{\N}{\mathbb{N}}
\title{Existence Result for Difference Equations on Non-Uniform Grids via Upper and Lower Solution Method}
\author{Shalmali Bandyopadhyay, Kimsear Lor}
\date{}
\begin{document}

\begin{abstract}
This paper establishes an existence theory for discrete second-order boundary value problems on non-uniform time grids using the upper and lower solution method. We consider difference equations of the form $u^{\Delta\Delta}(t_{i-1}) + f(t_i, u(t_i), u^\Delta(t_{i-1})) = 0$ on a non-uniform time grid ${t_0, t_1, \ldots, t_{n+2}}$ with mixed boundary conditions $u^\Delta(t_0) = 0$ and $u(t_{n+2}) = g(t_{n+2})$. This extends previous work on homogeneous boundary conditions to the non-homogeneous case, requiring a sophisticated functional analytic framework to handle the resulting affine function spaces. Our approach employs a decomposition strategy that separates boundary effects from the differential structure, enabling the application of Brouwer's Fixed Point Theorem to establish existence with solutions bounded between upper and lower functions.
\end{abstract}
\maketitle

\noindent \textbf{Keywords:} Discrete time scale, Difference equations, Non-uniform step size, Boundary value problem, Non-homogeneous mixed boundary, Upper and lower solution method

\noindent \textbf{Mathematics Subject Classification (2020):} 34B16, 34B18, 34B40, 39A10

\section{Introduction}

Difference equations arise naturally in numerous scientific and engineering applications where discrete time modeling is essential or preferred over continuous formulations. In computational finance discrete-time models are fundamental for option pricing, where finite difference methods are used (e.g. see  \cite{Schwartz1977}) to price options by approximating continuous-time differential equations with discrete-time difference equations, building on foundational work from the 1970s. Economic modeling extensively employs \cite{FergusonLim2003} discrete time formulations, as economic behavior is inherently dynamic and much of economic analysis is based on discrete time periods such as months, quarters, or years, reflecting the periodic nature of data collection and decision-making in financial markets and policy analysis.

Simultaneously, population dynamics modeling represents a crucial domain where difference equations have a long history of use as discrete time models (see  \cite{Cushing2019}), describing typically autonomous discrete time dynamics. Mathematical biology applications continue to expand, where discrete Lotka-Volterra models are more appropriate than continuous ones when populations are of non-overlapping generations, with studies showing that discrete time models governed by difference equations provide more realistic descriptions of population interactions (\cite{Din2013}). 
Contemporary epidemiological applications have demonstrated the importance of \cite{Din2021,DelaMaza2021} discrete SEIR models, where discrete forms are discussed to describe viral disease propagation and find control mechanisms for rapid spread of infectious diseases like COVID-19. 

Engineering applications particularly benefit from digital signal processing, where difference equations characterize digital filters and provide the mathematical foundation for implementing finite impulse response and infinite impulse response filters in real-time systems (e.g. \cite{Shenoi2005,Baraniuk2020} ). Additionally, control theory applications utilize difference equations (\cite{Akman2016}) for discrete-time system analysis, where linear constant-coefficient difference equations model linear time-invariant systems and enable the design of digital controllers for various engineering applications. Classical applications in \cite{VeldmanRinzema1992} non-uniform grid discretizations continue to show that careful consideration of truncation errors introduced by variable step sizes is essential for accurate numerical solutions in computational fluid dynamics and partial differential equation solvers. The theoretical foundations are well established in \cite{Elaydi2008,Sayama2024} comprehensive treatments of discrete chaos and complex systems modeling.

In this manuscript we establish existence result of a difference equation on nonuniform grid with mixed boundary condition by employing standard upper and lower solution method, extensively available in the literature together with Brouwer fixed point theory. Prior to stating our problem, we begin by establishing the mathematical framework for non-uniform time grids and associated difference calculus. For further details regarding difference calculus, we refer readers to the book \cite{KelleyPeterson1991}.

\begin{defi}\label{def:grid}
Let $n \in \N$ be fixed and $a < b$ be real numbers. A \emph{non-uniform time grid} is a strictly increasing sequence $\mathbb{T} = \{t_0, t_1, \ldots, t_{n+2}\}$ where $a = t_0 < t_1 < \ldots < t_{n+2} = b$.
\end{defi}
\begin{defi}\label{def:stepsize}
Define the step sizes $h_i = t_{i+1} - t_i$ for $i = 0, 1, \ldots, n+1$, and assume there exist constants $h_{\min}, h_{\max} > 0$ such that $0 < h_{\min} \leq h_i \leq h_{\max}$ for all $i$.
\end{defi}

\begin{defi}\label{def:delta_derivative}
For a function $u: \mathbb{T} \to \R$, the \emph{delta derivative} at $t_i$ is defined by
\begin{equation*}
u^\Delta(t_i) = \frac{u(t_{i+1}) - u(t_i)}{t_{i+1} - t_i} = \frac{u(t_{i+1}) - u(t_i)}{h_i}
\end{equation*}
for $i = 0, 1, \ldots, n+1$.
\end{defi}

\begin{defi}\label{def:second_derivative}
The \emph{second delta derivative} is defined as
\begin{equation*}
u^{\Delta\Delta}(t_i) = \frac{u^\Delta(t_{i+1}) - u^\Delta(t_i)}{t_{i+1} - t_i} = \frac{u^\Delta(t_{i+1}) - u^\Delta(t_i)}{h_i}
\end{equation*}
for $i = 0, 1, \ldots, n$.
\end{defi}
Having introduced necessary definitions, we now consider the following boundary value problem.
\begin{equation}\label{eq:main_problem}
\begin{cases}
u^{\Delta\Delta}(t_{i-1}) + {f}(t_i, u(t_i), u^\Delta(t_{i-1})) = 0, & i \in \{1, 2, \ldots, n+1\} \\
u^\Delta(t_0) = 0 \\
u(t_{n+2}) = g(t_{n+2})
\end{cases}
\end{equation}
where $\{t_0, t_1, \ldots, t_{n+2}\}$ is a non-uniform partition of an interval $[a,b]$ with $a = t_0 < t_1 < \ldots < t_{n+2} = b$. Moreover, we assume

\noindent \textbf{(A1)} $f(t_i, \cdot, \cdot)$ is continuous on $\{t_1, \ldots, t_{n+1}\}$, right continuous at $t_0$ and left continuous at $t_{n+2}$ in the discrete sense; $f(\cdot, x,y)$ is continuous on $\mathbb{R}^2$.

\noindent \textbf{(A2)} ${f}$ is non-increasing in its third variable

\noindent \textbf{(A3)} $g: \mathbb{T} \to \R^+$ is continuous (in the discrete sense)
In \cite{kunkel2019}, authors studied the following problem, 
\begin{equation*}
\begin{cases}
u^{\Delta\Delta}(t_{i-1}) + {f}(t_i, u(t_i), u^\Delta(t_{i-1})) = 0, & i \in \{1, 2, \ldots, n+1\} \\
u^\Delta(t_0) = 0 \\
u(t_{n+2}) = 0.
\end{cases}
\end{equation*}
Later, in \cite{KunkelMartin}, authors extended the above problem for $n^{\text{th}}$ order derivative. In our case, the second boundary condition allows for non-homogeneous boundary values. The main challenge we faced was defining the underlying function space to accommodate this non-homogeneous structure. This problem requires a more sophisticated functional analytic approach compared to the standard homogeneous case. While homogeneous boundary conditions naturally define linear function spaces that are well-suited for fixed point theory applications, non-homogeneous conditions create affine spaces that lack the linear structure necessary for classical fixed point arguments. To overcome this challenge, we employ a careful decomposition strategy that separates the boundary effects from the core differential structure, allowing the problem to be reformulated within an appropriate linear framework while preserving the original boundary requirements (see Section \ref{sec:prelim}).

Prior to stating our theorem, we will provide definitions of solution, upper solution and lower solution.
\begin{defi}\label{def:solution}
A function $u: \mathbb{T} \to \R$ is a \emph{solution} of \eqref{eq:main_problem} if it satisfies both the difference equation and the boundary conditions.
\end{defi}

\begin{defi}\label{def:lower_solution}
A function $\alpha: \mathbb{T} \to \R$ is called a \emph{lower solution} of \eqref{eq:main_problem} if:
\begin{equation}
\begin{cases}
\alpha^{\Delta\Delta}(t_{i-1}) + {f}(t_i, \alpha(t_i), \alpha^\Delta(t_{i-1})) \geq 0, & i \in \{1, \ldots, n+1\} \\
\alpha^\Delta(t_0) \geq 0 \\
\alpha(t_{n+2}) \leq g(t_{n+2})
\end{cases}
\end{equation}
\end{defi}

\begin{defi}\label{def:upper_solution}
A function $\beta: \mathbb{T} \to \R$ is called an \emph{upper solution} of \eqref{eq:main_problem} if:
\begin{equation}
\begin{cases}
\beta^{\Delta\Delta}(t_{i-1}) + {f}(t_i, \beta(t_i), \beta^\Delta(t_{i-1})) \leq 0, & i \in \{1, \ldots, n+1\} \\
\beta^\Delta(t_0) \leq 0 \\
\beta(t_{n+2}) \geq g(t_{n+2})
\end{cases}
\end{equation}
\end{defi}

\begin{thm}\label{thm:main}
Let $\alpha$ and $\beta$ be lower and upper solutions of \eqref{eq:main_problem}, respectively, with $\alpha(t_i) \leq \beta(t_i)$ for all $i \in \{0, 1, \ldots, n+1\}$. Assume conditions \textbf{(A1)}-\textbf{(A3)} hold. Then \eqref{eq:main_problem} has a solution $u$ satisfying
\begin{equation}
\alpha(t_i) \leq u(t_i) \leq \beta(t_i), \quad i \in \{0, 1, \ldots, n+2\}
\end{equation}
\end{thm}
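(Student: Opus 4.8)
The plan is to reduce the non-homogeneous problem to a homogeneous one on a linear space, regularize the nonlinearity by truncation so that Brouwer's theorem applies, and then recover the original problem through a discrete maximum principle.

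First I would resolve the affine-space difficulty flagged in the introduction by the decomposition $u = w + \gamma$, where $\gamma \colon \mathbb{T} \to \R$ is the constant function $\gamma \equiv g(t_{n+2})$. Since $\gamma^\Delta(t_0) = 0$ and $\gamma(t_{n+2}) = g(t_{n+2})$, the shifted unknown $w$ lives in the genuine linear space $E = \{w \colon \mathbb{T} \to \R \mid w^\Delta(t_0) = 0,\ w(t_{n+2}) = 0\}$ and satisfies $w^{\Delta\Delta}(t_{i-1}) + f(t_i, w(t_i) + g(t_{n+2}), w^\Delta(t_{i-1})) = 0$, so all subsequent work takes place on the finite-dimensional $E$. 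Next I would regularize. Let $p(t_i, x) = \max\{\alpha(t_i), \min\{x, \beta(t_i)\}\}$ project $x$ onto $[\alpha(t_i), \beta(t_i)]$, fix a constant $M$ exceeding $\max_i |\alpha^\Delta(t_i)|$, $\max_i |\beta^\Delta(t_i)|$, and the a priori slope bound $h_{\min}^{-1}\max_i\big(|\beta(t_i)-\alpha(t_{i-1})| + |\alpha(t_i)-\beta(t_{i-1})|\big)$, and let $q_M$ truncate the derivative argument to $[-M, M]$. Define the modified nonlinearity
\[
F(t_i, x, y) = f\big(t_i, p(t_i, x), q_M(y)\big) - \frac{x - p(t_i, x)}{1 + |x - p(t_i, x)|}.
\]
By (A1) and compactness of $\{t_1,\ldots,t_{n+1}\}\times[\min\alpha,\max\beta]\times[-M,M]$, the map $F$ is continuous and bounded by some constant $C$.

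Summing the modified equation twice and invoking both boundary conditions yields the explicit representation
\[
u(t_j) = g(t_{n+2}) + \sum_{k=j}^{n+1} h_k \sum_{i=1}^{k} h_{i-1}\, F\big(t_i, u(t_i), u^\Delta(t_{i-1})\big),
\]
which automatically encodes $u^\Delta(t_0)=0$ and $u(t_{n+2})=g(t_{n+2})$ and defines a continuous operator $T$ on $E$ whose image lies in a fixed ball (of radius at most $C(n+2)^2 h_{\max}^2$, say). Restricting $T$ to a large closed ball that contains its image (compact and convex), Brouwer's Fixed Point Theorem furnishes a fixed point $u$, that is, a solution of the modified problem. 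The decisive step is then to show that every such $u$ satisfies $\alpha(t_i) \le u(t_i) \le \beta(t_i)$, so that $p$, the penalty, and $q_M$ all switch off and $u$ solves the original problem \eqref{eq:main_problem}. For the upper bound I would argue by contradiction: assume $v = u - \beta$ attains a positive maximum and let $m$ be the largest index at which it is attained. The right endpoint is excluded since $v(t_{n+2}) = g(t_{n+2}) - \beta(t_{n+2}) \le 0$, and the index $m = 0$ is excluded because $v^\Delta(t_0) = -\beta^\Delta(t_0) \ge 0$ forces $v(t_1)\ge v(t_0)$. For an interior index $1 \le m \le n+1$ the maximum gives $v^\Delta(t_{m-1}) \ge 0$ and $v^{\Delta\Delta}(t_{m-1}) \le 0$; subtracting the upper-solution inequality from the modified equation, the fact that $u(t_m) > \beta(t_m)$ makes the penalty term strictly positive, while monotonicity (A2) together with $q_M(u^\Delta(t_{m-1})) \ge \beta^\Delta(t_{m-1})$ (guaranteed by the choice of $M$) makes the difference of the $f$-terms nonnegative, forcing $v^{\Delta\Delta}(t_{m-1}) > 0$, a contradiction. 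The lower bound $u \ge \alpha$ follows symmetrically from the minimum of $u - \alpha$ and the lower-solution inequalities.

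I expect the maximum-principle step to be the main obstacle, as it is where the monotonicity hypothesis (A2) must be combined with the derivative truncation to control the third argument of $f$ (in place of a Nagumo-type growth condition), and where the two non-standard boundary indices $t_0$ and $t_{n+2}$ demand separate, careful treatment. Once $\alpha \le u \le \beta$ is established, the bound $|u^\Delta(t_{i-1})| \le M$ is automatic from the choice of $M$, so $F$ coincides with $f$ along $u$ and the fixed point solves \eqref{eq:main_problem} with the asserted bounds.
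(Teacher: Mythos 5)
Your proposal is correct and follows essentially the same route as the paper: the constant particular solution $w_p \equiv g(t_{n+2})$ reducing the affine space to the linear space $E_0$, a truncated and penalized auxiliary nonlinearity, the double-sum representation defining a continuous self-map of a closed ball in the finite-dimensional space, Brouwer's theorem, and a discrete maximum-principle contradiction at a maximizer of $u-\beta$ (resp.\ minimizer of $u-\alpha$) using \textbf{(A2)} and the penalty term. The only point of divergence is how the third argument of $f$ is controlled: the paper replaces the previous grid value $u(t_{i-1})$ by its projection $\sigma(t_i,\cdot)$ onto $[\alpha(t_{i-1}),\beta(t_{i-1})]$, whereas you clip the difference quotient directly to $[-M,M]$ with $M$ determined by $\alpha$, $\beta$ and $h_{\min}$; both choices make the key inequality $f(t_\ell,\beta(t_\ell),\beta^\Delta(t_{\ell-1}))\ge f(t_\ell,\beta(t_\ell),\cdot)$ available at the maximizer, and your version has the mild advantage that boundedness of the modified nonlinearity is immediate from continuity on an explicit compact set, where the paper's boundedness argument appeals somewhat loosely to properties of solutions of the auxiliary problem.
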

The remainder of this paper is organized as follows. Section \ref{sec:prelim} establishes the functional analytic framework, including the definition of appropriate function spaces, the solution decomposition theorem that separates boundary conditions from the difference equation, and Brouwer's Fixed Point Theorem in the discrete setting. Section \ref{sec:existence} presents the main existence result through three key steps: construction of an auxiliary problem with modified nonlinearity that handles solutions outside the upper and lower bounds, development of a solution operator whose fixed points correspond to solutions of the auxiliary problem, and application of Brouwer's theorem to establish existence. Finally, Section \ref{sec:bound} completes the proof by verifying that any solution of auxiliary problem remains within the prescribed upper and lower bounds through a contradiction argument utilizing the maximum principle for discrete second derivatives to guarantee existence of solution of original problem.
\section{Preliminaries}
\label{sec:prelim}
We begin by defining the underlying function spaces that are required for our purposes. 
\begin{defi}
    The space $E$ is defined as $$E = \{u: \mathbb{T} \to \R : u^\Delta(t_0) = 0, u(t_{n+2}) = g(t_{n+2})\}$$
\end{defi} 

\begin{defi}
The space $E_0$ is defined as
\begin{equation*}
E_0 = \{v: \mathbb{T} \to \mathbb{R} : v^\Delta(t_0) = 0, v(t_{n+2}) = 0\}.
\end{equation*}
\end{defi}

\begin{pro}
The space $E_0$ equipped with the supremum norm $\|v\|_\infty = \max_{t \in \mathbb{T}} |v(t)|$ is a normed linear space.
\end{pro}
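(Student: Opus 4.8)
The plan is to verify the two assertions separately: first that $E_0$ is a linear subspace of the space of all functions $\mathbb{T} \to \R$, and then that $\|\cdot\|_\infty$ satisfies the three norm axioms on $E_0$. For the linear-space claim, the decisive observation is that the delta derivative is a linear operator: directly from Definition \ref{def:delta_derivative}, for functions $u, v$ and scalars $c, d \in \R$ one has $(cu + dv)^\Delta(t_i) = c\,u^\Delta(t_i) + d\,v^\Delta(t_i)$, since the difference quotient is linear in its argument. First I would note that the zero function lies in $E_0$ because both defining constraints $v^\Delta(t_0) = 0$ and $v(t_{n+2}) = 0$ are homogeneous. Then, given $u, v \in E_0$ and scalars $c, d$, linearity of the delta derivative together with linearity of pointwise evaluation gives $(cu + dv)^\Delta(t_0) = c\,u^\Delta(t_0) + d\,v^\Delta(t_0) = 0$ and $(cu + dv)(t_{n+2}) = c\,u(t_{n+2}) + d\,v(t_{n+2}) = 0$, so $cu + dv \in E_0$. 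This establishes closure under linear combinations, and hence $E_0$ is a subspace.

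For the norm axioms, the essential structural point is that $\mathbb{T}$ is a finite set of $n+3$ points, so $\max_{t \in \mathbb{T}} |v(t)|$ is always attained and finite. Nonnegativity and absolute homogeneity $\|cv\|_\infty = |c|\,\|v\|_\infty$ follow at once from the corresponding properties of $|\cdot|$ and the elementary behavior of the maximum over a finite set. The triangle inequality I would obtain by applying the pointwise estimate $|u(t) + v(t)| \le |u(t)| + |v(t)| \le \|u\|_\infty + \|v\|_\infty$ at each $t \in \mathbb{T}$ and then taking the maximum over $t$. For definiteness, $\|v\|_\infty = 0$ forces $|v(t)| = 0$ at every $t \in \mathbb{T}$, so $v$ is the zero element; note this is a genuine norm rather than a mere seminorm precisely because the underlying domain is discrete and finite.

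This argument is entirely routine and I expect no genuine obstacle. The only step deserving attention is the verification that the two defining conditions of $E_0$ are preserved under linear combinations, which reduces to the linearity of the map $u \mapsto u^\Delta$. I would make that linearity explicit, since it is exactly the structural feature that homogeneous boundary conditions supply and that the non-homogeneous space $E$ lacks — the distinction the paper flags as the core difficulty motivating the later decomposition strategy.
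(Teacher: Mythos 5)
Your proposal is correct and follows essentially the same routine verification as the paper: checking that the homogeneous boundary conditions are preserved under linear combinations (the paper splits this into separate closure-under-addition and closure-under-scalar-multiplication steps, but the content is identical) and then verifying the three norm axioms for the supremum norm over the finite set $\mathbb{T}$. No substantive difference in approach.
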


\begin{proof}
We verify that $E_0$ is a linear space and that the supremum norm satisfies the norm axioms.

\textbf{Linear space properties:}

\textit{Closure under addition:} Let $v_1, v_2 \in E_0$. Then
\begin{align*}
(v_1 + v_2)^\Delta(t_0) &= v_1^\Delta(t_0) + v_2^\Delta(t_0) = 0 + 0 = 0,\\
(v_1 + v_2)(t_{n+2}) &= v_1(t_{n+2}) + v_2(t_{n+2}) = 0 + 0 = 0.
\end{align*}
Therefore, $v_1 + v_2 \in E_0$.

\textit{Closure under scalar multiplication:} Let $v \in E_0$ and $\alpha \in \mathbb{R}$. Then
\begin{align*}
(\alpha v)^\Delta(t_0) &= \alpha v^\Delta(t_0) = \alpha \cdot 0 = 0,\\
(\alpha v)(t_{n+2}) &= \alpha v(t_{n+2}) = \alpha \cdot 0 = 0.
\end{align*}
Therefore, $\alpha v \in E_0$.

\textit{Zero element:} The zero function $0: \mathbb{T} \to \mathbb{R}$ defined by $0(t) = 0$ for all $t \in \mathbb{T}$ satisfies $0^\Delta(t_0) = 0$ and $0(t_{n+2}) = 0$, so $0 \in E_0$.

\textit{Additive inverse:} For any $v \in E_0$, we have $-v \in E_0$ since $(-v)^\Delta(t_0) = -v^\Delta(t_0) = 0$ and $(-v)(t_{n+2}) = -v(t_{n+2}) = 0$.

\textbf{Norm properties:}

\textit{Positive definiteness:} $\|v\|_\infty \geq 0$ for all $v \in E_0$, and $\|v\|_\infty = 0$ if and only if $|v(t)| = 0$ for all $t \in \mathbb{T}$, which occurs if and only if $v$ is the zero function.

\textit{Homogeneity:} For any $\alpha \in \mathbb{R}$ and $v \in E_0$,
\begin{equation*}
\|\alpha v\|_\infty = \max_{t \in \mathbb{T}} |\alpha v(t)| = |\alpha| \max_{t \in \mathbb{T}} |v(t)| = |\alpha| \|v\|_\infty.
\end{equation*}

\textit{Triangle inequality:} For any $v_1, v_2 \in E_0$,
\begin{equation*}
\|v_1 + v_2\|_\infty = \max_{t \in \mathbb{T}} |v_1(t) + v_2(t)| \leq \max_{t \in \mathbb{T}} (|v_1(t)| + |v_2(t)|) \leq \|v_1\|_\infty + \|v_2\|_\infty.
\end{equation*}
\end{proof}

\begin{defi}
Define the particular solution $w_p: \mathbb{T} \to \mathbb{R}$ by
\begin{equation}
w_p(t_k) = g(t_{n+2}) \quad \text{for all } k =0, 1, \ldots, n+2.
\end{equation}
\end{defi}

\begin{lem}
The function $w_p$ satisfies the boundary conditions of problem \eqref{eq:main_problem}, i.e., $w_p^\Delta(t_0) = 0$ and $w_p(t_{n+2}) = g(t_{n+2})$.
\end{lem}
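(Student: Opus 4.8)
The plan is to verify the two boundary conditions separately, each by direct substitution into the relevant definition. Since $w_p$ is defined to be the constant function taking the value $g(t_{n+2})$ at every grid point, both conditions should follow immediately, so the argument is a routine verification rather than one requiring any genuine insight.

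First I would check the condition at the right endpoint. Taking $k = n+2$ in the defining formula $w_p(t_k) = g(t_{n+2})$ gives $w_p(t_{n+2}) = g(t_{n+2})$ at once, so the second boundary condition holds by construction.

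Next I would verify the left-endpoint condition using Definition \ref{def:delta_derivative}. Since $h_0 = t_1 - t_0 > 0$ by Definition \ref{def:stepsize}, the difference quotient is well-defined, and
\begin{equation*}
w_p^\Delta(t_0) = \frac{w_p(t_1) - w_p(t_0)}{h_0} = \frac{g(t_{n+2}) - g(t_{n+2})}{h_0} = 0,
\end{equation*}
where both terms in the numerator equal $g(t_{n+2})$ precisely because $w_p$ is constant. This establishes the first boundary condition.

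There is essentially no obstacle here: the content of the lemma is simply that the delta derivative annihilates constant functions and that a constant function attains its value at every point, including the endpoint $t_{n+2}$. The only point worth flagging is that one must invoke $h_0 > 0$ so that the difference quotient defining $w_p^\Delta(t_0)$ is meaningful, and this is guaranteed by the standing assumption $0 < h_{\min} \leq h_i$ on the step sizes.
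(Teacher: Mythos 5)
Your proof is correct and follows essentially the same route as the paper: the paper likewise computes $w_p^\Delta(t_0) = \frac{g(t_{n+2}) - g(t_{n+2})}{h_0} = 0$ using constancy of $w_p$ and observes $w_p(t_{n+2}) = g(t_{n+2})$ directly from the definition. Your additional remark that $h_0 > 0$ ensures the difference quotient is well-defined is a harmless (and accurate) extra detail.
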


\begin{proof}
Since $w_p$ is constant on $\mathbb{T}$, we have
\begin{equation*}
w_p^\Delta(t_0) = \frac{w_p(t_1) - w_p(t_0)}{h_0} = \frac{g(t_{n+2}) - g(t_{n+2})}{h_0} = 0,
\end{equation*}
and clearly $w_p(t_{n+2}) = g(t_{n+2})$.
\end{proof}

\begin{thm}[Solution Decomposition]
Every function $u \in E$ can be uniquely written as
\begin{equation*}
u = v + w_p,
\end{equation*}
where $v \in E_0$ and $w_p$ is the particular solution defined above.
\end{thm}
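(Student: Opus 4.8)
The plan is to establish existence and uniqueness of the decomposition separately, viewing $E$ as an affine translate of the linear space $E_0$ by the fixed function $w_p$. Since $w_p$ is a single, explicitly prescribed function and not a free parameter, the only possible candidate for the $E_0$-component of a given $u \in E$ is $v := u - w_p$. Consequently the entire argument reduces to verifying that this candidate genuinely lies in $E_0$.

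First I would address existence. Given $u \in E$, I set $v := u - w_p$ and check the two defining constraints of $E_0$. For the derivative condition I would invoke the additivity of the difference quotient in Definition \ref{def:delta_derivative}, so that $(u - w_p)^\Delta(t_0) = u^\Delta(t_0) - w_p^\Delta(t_0)$; combining $u^\Delta(t_0) = 0$ (since $u \in E$) with $w_p^\Delta(t_0) = 0$ (from the preceding lemma) gives $v^\Delta(t_0) = 0$. For the endpoint condition I would compute $v(t_{n+2}) = u(t_{n+2}) - w_p(t_{n+2}) = g(t_{n+2}) - g(t_{n+2}) = 0$, again using $u \in E$ and the lemma. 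Hence $v \in E_0$, and $u = v + w_p$ is a valid decomposition.

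For uniqueness, I would suppose $u = v_1 + w_p = v_2 + w_p$ with $v_1, v_2 \in E_0$ and subtract the common term $w_p$ from both sides, concluding $v_1 = v_2$ pointwise on $\mathbb{T}$. This step is immediate precisely because $w_p$ is fixed rather than varying with the decomposition.

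I do not anticipate a genuine obstacle here: the statement is the standard fact that an affine subspace is a coset of its underlying linear subspace, and its validity rests entirely on the linearity of the delta-derivative operator (which makes the boundary functionals $u \mapsto u^\Delta(t_0)$ and $u \mapsto u(t_{n+2})$ linear) together with the already-verified lemma that $w_p$ meets the inhomogeneous boundary data. If any point deserves care, it is only the confirmation that $v = u - w_p$ satisfies the homogeneous derivative condition, since that condition involves a difference quotient rather than a pointwise value; but this is dispatched by the additivity noted above.
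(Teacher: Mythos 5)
Your proposal is correct and follows essentially the same route as the paper: define $v := u - w_p$, verify the two membership conditions for $E_0$ using linearity of the delta derivative and the lemma on $w_p$, and obtain uniqueness by cancelling the fixed function $w_p$. The only difference is expository — you make explicit the framing of $E$ as a coset of $E_0$ — but the substance of the argument is identical.
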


\begin{proof}
Given $u \in E$, define $v = u - w_p$. Then
\begin{align*}
v^\Delta(t_0) &= u^\Delta(t_0) - w_p^\Delta(t_0) = 0 - 0 = 0,\\
v(t_{n+2}) &= u(t_{n+2}) - w_p(t_{n+2}) = g(t_{n+2}) - g(t_{n+2}) = 0.
\end{align*}
Therefore, $v \in E_0$. Uniqueness follows from the fact that if $u = v_1 + w_p = v_2 + w_p$, then $v_1 = v_2$.
\end{proof}
\begin{cor}
    If $v \in E_0$ satisfies the DE in Eq. \eqref{eq:main_problem}, then $u \in E$ is a solution of \eqref{eq:main_problem}.
\end{cor}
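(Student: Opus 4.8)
The plan is to exploit the single structural fact that the second delta derivative annihilates constants, so that under the decomposition $u = v + w_p$ the interior difference equation transfers from $v$ to $u$ as a verbatim identity, while the boundary data is carried entirely by $w_p$. First I would invoke the Solution Decomposition Theorem: since $v \in E_0$, the function $u := v + w_p$ belongs to $E$, whence $u^\Delta(t_0) = 0$ and $u(t_{n+2}) = g(t_{n+2})$. Thus both boundary conditions of \eqref{eq:main_problem} hold for $u$ with no further computation.

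Next I would record the derivative identities for the constant function $w_p$. Because $w_p(t_k) = g(t_{n+2})$ for every $k$, Definition \ref{def:delta_derivative} yields $w_p^\Delta(t_i) = 0$ for all $i$, and then Definition \ref{def:second_derivative} yields $w_p^{\Delta\Delta}(t_i) = 0$ for every admissible $i$. Since the delta and second delta derivatives are linear, this gives $u^\Delta(t_{i-1}) = v^\Delta(t_{i-1})$ and $u^{\Delta\Delta}(t_{i-1}) = v^{\Delta\Delta}(t_{i-1})$ at every node. In particular the equation assumed for $v$, namely $v^{\Delta\Delta}(t_{i-1}) + f\bigl(t_i, v(t_i) + g(t_{n+2}), v^\Delta(t_{i-1})\bigr) = 0$ for $i \in \{1, \ldots, n+1\}$, is exactly $u^{\Delta\Delta}(t_{i-1}) + f\bigl(t_i, u(t_i), u^\Delta(t_{i-1})\bigr) = 0$ once we substitute $u(t_i) = v(t_i) + g(t_{n+2})$ and $u^\Delta(t_{i-1}) = v^\Delta(t_{i-1})$; the two relations coincide.

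Combining these observations, $u = v + w_p$ satisfies the difference equation on $\{1, \ldots, n+1\}$ together with both boundary conditions, so by Definition \ref{def:solution} it is a solution of \eqref{eq:main_problem}. I expect no genuine obstacle: the whole argument reduces to the identity $w_p^{\Delta\Delta} \equiv 0$ and the linearity already exploited in the decomposition theorem. The only point demanding care is notational --- making explicit that the nonlinearity in the equation for $v$ is evaluated along $u = v + w_p$ (hence at $v(t_i) + g(t_{n+2})$ in the second argument and $v^\Delta(t_{i-1})$ in the third), rather than along $v$ alone, so that the passage between the two equations is a genuine identity rather than a mere implication.
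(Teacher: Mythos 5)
Your proof is correct and follows essentially the same route as the paper: the whole argument rests on the identity $u^{\Delta\Delta} = v^{\Delta\Delta}$ (and $u^\Delta = v^\Delta$) coming from $w_p$ being constant, together with $u = v + w_p \in E$ supplying the boundary conditions. Your version merely spells out the details the paper leaves implicit, in particular the (worthwhile) clarification that the nonlinearity in the equation for $v$ must be understood as evaluated along $u = v + w_p$.
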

\begin{proof}
    The proof immediately follows from the fact that $v^{\Delta\Delta}=u^{\Delta\Delta}$.
\end{proof}

\begin{rem}{\rm
The decomposition $u = v + w_p$ allows us to analyze the operator $T: E \to E$ defined in equation \eqref{eq:T} by exploiting the linear structure of $E_0$ while maintaining the affine boundary conditions of the original problem. This framework provides the necessary functional analytic foundation for applying Brouwer's Fixed Point Theorem in Section 3.3.}
\end{rem}

\begin{thm}[Brouwer's Fixed Point Theorem]
Let $K$ be a nonempty, compact, convex subset of a finite-dimensional normed space $X$, and let $T: K \to K$ be a continuous mapping. Then $T$ has at least one fixed point in $K$.
\end{thm}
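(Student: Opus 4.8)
The plan is to reduce the general statement to a combinatorial core and then establish that core directly. Since the fixed point property is invariant under homeomorphism, the first step is to observe that any nonempty compact convex subset $K$ of a finite-dimensional normed space $X \cong \R^m$ is homeomorphic to a standard simplex $\Delta^d = \{x \in \R^{d+1} : x_j \ge 0,\ \sum_j x_j = 1\}$, where $d = \dim(\text{aff } K)$ (the case $d = 0$ being a single point, which is trivial). Indeed, within its affine hull $K$ is a compact convex body with nonempty relative interior, and radial projection from an interior point (via the Minkowski functional) yields a homeomorphism onto a closed ball, which is in turn homeomorphic to $\Delta^d$. Conjugating a continuous self-map $T$ of $K$ by these homeomorphisms produces a continuous self-map of $\Delta^d$, so it suffices to prove the theorem when $K = \Delta^d$.

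The second step is to prove the combinatorial Sperner's Lemma: for any simplicial subdivision of $\Delta^d$ whose vertices are labeled from $\{0,1,\ldots,d\}$ so that each vertex lying on the face spanned by $\{e_{i_0}, \ldots, e_{i_k}\}$ receives a label in $\{i_0, \ldots, i_k\}$ (the Sperner condition), the number of fully labeled $d$-simplices (those carrying all $d+1$ labels) is odd, hence nonzero. I would argue this by induction on $d$ using a parity (door-counting) argument: count pairs consisting of a subdivision simplex together with one of its fully-labeled facets in two ways, applying the inductive hypothesis to the boundary faces.

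The third step is the approximation-and-limit argument. Writing $T(x) = (T_0(x), \ldots, T_d(x))$ in barycentric coordinates, I would take subdivisions of mesh tending to zero and assign to each vertex $v$ a label $i$ for which $v_i > 0$ and $T_i(v) \le v_i$; such an index exists because $\sum_i T_i(v) = \sum_i v_i = 1$ forbids $T_i(v) > v_i$ at every coordinate with $v_i > 0$. One checks this labeling satisfies the Sperner condition, since a vertex on a face has positive coordinates only among that face's indices. Sperner's Lemma then yields, for each subdivision, a fully labeled small simplex; as the mesh shrinks its vertices cluster, and by compactness of $\Delta^d$ I extract a subsequence converging to a point $x^\ast$. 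For each $i$ there is a vertex with label $i$ in the shrinking simplices, so passing to the limit via continuity of $T$ gives $T_i(x^\ast) \le x_i^\ast$ for every $i$; since both $(T_i(x^\ast))$ and $(x_i^\ast)$ are probability vectors summing to one, all inequalities are equalities and $T(x^\ast) = x^\ast$.

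The main obstacle I anticipate is the parity induction establishing Sperner's Lemma, together with verifying that the $T$-induced labeling genuinely meets the Sperner boundary condition on every face of every dimension; these are the delicate points, whereas the closing limit step follows routinely from compactness and continuity. An alternative route, bypassing the combinatorics, would instead reduce to a closed ball $\overline{B}^d$ and establish the no-retraction theorem (there is no continuous retraction of $\overline{B}^d$ onto $\partial \overline{B}^d$) by smooth approximation together with a differential-forms degree computation in the style of Milnor; there the analytic heart of the difficulty migrates to justifying the smoothing and the volume-form identity.
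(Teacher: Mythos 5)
Your proposal is a correct and essentially complete outline of the classical Sperner's-Lemma proof of Brouwer's theorem: the reduction of a compact convex body to the standard simplex via the Minkowski functional, the parity induction for Sperner's Lemma, the existence and Sperner-admissibility of the labeling $i$ with $v_i>0$ and $T_i(v)\le v_i$, and the compactness-plus-continuity limit are all sound, and the final step correctly upgrades the coordinatewise inequalities $T_i(x^\ast)\le x_i^\ast$ to equalities because both sides sum to one. This is, however, a genuinely different route from what the paper does: the paper does not prove Brouwer's theorem at all in the stated generality. It quotes the theorem as a known result and only establishes the one-dimensional special case ($f:[0,1]\to[0,1]$ has a fixed point) by applying the Intermediate Value Theorem to $g(x)=f(x)-x$, offered as motivation rather than as a proof. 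Note that the one-dimensional argument would not suffice for the paper's actual application in Corollary \ref{cor:Brouwer}, where the ball lives in a space isomorphic to $\mathbb{R}^{n+1}$; your argument covers every dimension and so actually supplies the missing foundation. The trade-off is the expected one: the paper's IVT illustration is elementary and self-contained but dimension-limited, whereas your approach requires the combinatorial machinery of Sperner's Lemma (or, in your alternative, the no-retraction theorem via smooth approximation and a degree computation) but delivers the full statement that the existence result in Section \ref{sec:existence} genuinely relies on. The two delicate points you flag---the parity induction and the verification of the Sperner boundary condition on faces of every dimension---are indeed where the care is needed, but both go through exactly as you describe.
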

Let us first understand this concept of fixed point theory for a single-variable function and how and when we can guarantee that a fixed point exists.

\noindent \textbf{Claim}: Let $f: [0,1] \rightarrow [0,1]$, and it is continuous in that domain; then a fixed point of the function $f$ exists in the same domain.

\begin{proof}
    Let $g(x) = f(x) - x$, where $g(x)$ is continuous, since $f(x)$ is continuous and $x$ is an identity function.  Notice that within our domain, $f(0)>0$ and $f(1)<1$.
    
Therefore,
    \begin{align*}
        g(0) = f(0) - 0 = f(0) > 0 \Rightarrow g(0) > 0\\
        g(1) = f(1) - 1 = f(1) - 1 < 0 \Rightarrow g(1) < 0
    \end{align*}
    
Knowing that $g(0)>0$ and $g(1)<0$, we can apply the Intermediate Value Theorem and conclude that there exists a value $c$ in the established domain such that $g(c)=0$ $\Rightarrow f(c)-c=0$, therefore $f(c) = c$, i.e. $c$ is a fixed point of $f(x)$. 
\end{proof}
\begin{rem}{\rm
 Note that the range of our interval does not allow $f(x)$ to go beyond 1 or below 0; in other words, $f(1) \not> 1$ and $f(0) \not< 0$. And if $f(0)=0$ or $f(1)=1$, we would already have a fixed point of $f(x)$ either $0$ or $1$. }   
\end{rem}

\begin{cor}[Application to Discrete Boundary Value Problems]
\label{cor:Brouwer}
Let $\overline{B_r} = \{u \in E : \|u\|_\infty \leq r\}$ for some $r > 0$. If $T: E \to E$ is continuous and satisfies $T(\overline{B_r}) \subseteq \overline{B_r}$, then $T$ has a fixed point in $\overline{B_r}$.
\end{cor}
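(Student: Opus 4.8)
The plan is to recognize this corollary as a direct specialization of Brouwer's Fixed Point Theorem in which the ambient finite-dimensional normed space is the space of all real-valued functions on the grid $\mathbb{T}$. Since $\mathbb{T} = \{t_0, t_1, \ldots, t_{n+2}\}$ consists of exactly $n+3$ points, every function $u: \mathbb{T} \to \R$ is determined by the vector $(u(t_0), \ldots, u(t_{n+2})) \in \R^{n+3}$, so this function space is finite-dimensional and the supremum norm $\|\cdot\|_\infty$ turns it into a finite-dimensional normed space $X$. I would take $K = \overline{B_r}$ and verify the three hypotheses of Brouwer's theorem---nonemptiness, compactness, and convexity---after which the conclusion is immediate, since $T$ is continuous and maps $\overline{B_r}$ into itself by assumption.

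First I would address convexity, which is the one place where the affine (rather than linear) nature of $E$ must be handled with care. Given $u_1, u_2 \in \overline{B_r}$ and $\lambda \in [0,1]$, the combination $w = \lambda u_1 + (1-\lambda) u_2$ satisfies $w^\Delta(t_0) = \lambda u_1^\Delta(t_0) + (1-\lambda) u_2^\Delta(t_0) = 0$ and $w(t_{n+2}) = \lambda g(t_{n+2}) + (1-\lambda) g(t_{n+2}) = g(t_{n+2})$, so $w \in E$. The key observation is that although $E$ is not a linear space, it is closed under convex (indeed affine) combinations, precisely because the boundary datum $g(t_{n+2})$ is reproduced by any such combination. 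The norm bound then follows from the triangle inequality, $\|w\|_\infty \leq \lambda \|u_1\|_\infty + (1-\lambda)\|u_2\|_\infty \leq r$, giving $w \in \overline{B_r}$.

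Next I would establish compactness via the Heine--Borel theorem in $X \cong \R^{n+3}$. Boundedness is immediate from the defining inequality $\|u\|_\infty \leq r$. For closedness, I would write $\overline{B_r}$ as the intersection of $E$ with the closed ball $\{u \in X : \|u\|_\infty \leq r\}$; the ball is closed, and $E$ is closed because it is the solution set of the two continuous affine constraints $u(t_1) - u(t_0) = 0$ and $u(t_{n+2}) = g(t_{n+2})$. Nonemptiness I would settle by exhibiting the particular solution $w_p$, which lies in $E$ with $\|w_p\|_\infty = |g(t_{n+2})|$, so that $\overline{B_r}$ is nonempty whenever $r \geq |g(t_{n+2})|$, the regime in which the corollary is actually invoked.

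With $K = \overline{B_r}$ nonempty, compact, and convex, and $T: K \to K$ continuous by hypothesis, Brouwer's Fixed Point Theorem yields a fixed point of $T$ in $\overline{B_r}$, completing the argument. I expect the main obstacle to be not any hard estimate but rather the conceptual point in the convexity step: one must verify that the affine structure of $E$---which is exactly what made the non-homogeneous problem delicate in the first place---is nonetheless compatible with convex combinations, so that $\overline{B_r}$ genuinely sits as a convex set inside the linear ambient space $X$ rather than inside the non-linear set $E$ alone.
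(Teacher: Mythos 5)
Your proposal is correct and takes essentially the same route as the paper: identify the problem as finite-dimensional, establish that $\overline{B_r}$ is nonempty, compact (closed and bounded via Heine--Borel), and convex, and then invoke Brouwer's theorem. The only differences are cosmetic --- you verify convexity by directly checking that the boundary conditions survive convex combinations rather than via the $u = v + w_p$ decomposition, you keep the ambient space as $\R^{n+3}$ instead of reducing to $\R^{n+1}$, and you explicitly flag the nonemptiness requirement $r \geq |g(t_{n+2})|$, a hypothesis the corollary's statement and the paper's proof leave implicit.
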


\begin{proof}
Since $\mathbb{T} = \{t_0, t_1, \ldots, t_{n+2}\}$ is finite with $|\mathbb{T}| = n+3$ elements, the space $E$ can be identified with a subset of $\mathbb{R}^{n+3}$ via the isomorphism
\begin{equation*}
\Phi: E \to \mathbb{R}^{n+3}, \quad \Phi(u) = (u(t_0), u(t_1), \ldots, u(t_{n+2})).
\end{equation*}
However, due to the boundary conditions $u^\Delta(t_0) = 0$ and $u(t_{n+2}) = g(t_{n+2})$, the set $E$ is actually isomorphic to $\mathbb{R}^{n+1}$. Indeed, using the decomposition $u = v + w_p$ where $v \in E_0$, the space $E_0$ can be parameterized by the values $(v(t_1), v(t_2), \ldots, v(t_{n+1}))$ since:
\begin{itemize}
\item[] $v(t_{n+2}) = 0$ (fixed by boundary condition)
\item[] The constraint $v^\Delta(t_0) = 0$ gives us $v(t_1) = v(t_0)$
\end{itemize}

Therefore, $E_0$ is isomorphic to $\mathbb{R}^{n+1}$, and consequently $E$ is also $(n+1)$-dimensional.

The set ${\overline{B_r}} = \{u \in E : \|u\|_\infty \leq r\}$ is:
\begin{itemize}
\item[] \textbf{Bounded:} By definition, $\|u\|_\infty \leq r$ for all $u \in \overline{B_r}$.
\item[] \textbf{Closed:} If $u_k \in \overline{B_r}$ and $u_k \to u$ in $E$, then $\|u_k\|_\infty \leq r$ for all $k$. By continuity of the norm, $\|u\|_\infty = \lim_{k \to \infty} \|u_k\|_\infty \leq r$, so $u \in \overline{B_r}$.
\item[] \textbf{Convex:} For any $u_1, u_2 \in \overline{B_r}$ and $\lambda \in [0,1]$, we need to show $\lambda u_1 + (1-\lambda) u_2 \in E$ and $\|\lambda u_1 + (1-\lambda) u_2\|_\infty \leq r$.
\end{itemize}

For convexity, let $u_1, u_2 \in \overline{B_r}$ and $\lambda \in [0,1]$. Set $u_\lambda = \lambda u_1 + (1-\lambda) u_2$. Using the decomposition $u_i = v_i + w_p$:
\begin{equation*}
u_\lambda = \lambda(v_1 + w_p) + (1-\lambda)(v_2 + w_p) = \lambda v_1 + (1-\lambda) v_2 + w_p.
\end{equation*}
Since $E_0$ is a linear space, $\lambda v_1 + (1-\lambda) v_2 \in E_0$, so $u_\lambda \in E$. Moreover,
\begin{equation}
\|u_\lambda\|_\infty = \|\lambda u_1 + (1-\lambda) u_2\|_\infty \leq \lambda \|u_1\|_\infty + (1-\lambda) \|u_2\|_\infty \leq \lambda r + (1-\lambda) r = r.
\end{equation}

Since $E$ is finite-dimensional and $\overline{B_r}$ is closed and bounded, $\overline{B_r}$ is compact. The hypotheses of Brouwer's theorem are satisfied, guaranteeing the existence of a fixed point.
\end{proof}

\section{Existence Result}
\label{sec:existence}
In this section, we prove the existence result stated in Theorem \eqref{thm:main}. The proof follows three main steps: first, we construct an auxiliary problem with modified nonlinearity to handle cases where the original nonlinearity $f$ lies outside the bounds defined by our upper and lower solutions; second, we define a solution operator $T$ whose fixed points correspond to solutions of Problem \eqref{eq:main_problem}; and third, we apply Brouwer's Fixed Point Theorem to establish existence of a fixed point of the operator $T$.
\subsection{Construction of Auxiliary Problem}

Observe that, the nonlinearity $f$ need not necessarily be in between the upper and lower solution. Therefore, we define the following auxiliary function $\widetilde{f}$, for $i=1,2, \ldots, n+1$
\begin{equation}
\label{eq:tilde-f}
\widetilde{f}\left(t_i, x, \frac{x - z} {h_{i-1}} \right) = \begin{cases}
f\left(t_i, \beta(t_i), \frac{\beta(t_i)-\sigma(t_{i}, z)}{h_{i-1}}\right) - \frac{x-\beta(t_{i})}{x-\beta(t_{i}) + 1}, & \text{if } x > \beta(t_{i}), \\[0.5em]
f\left(t_i, x, \frac{x - \sigma(t_{i}, z)}{h_{i-1}}\right), & \text{if } \alpha(t_{i}) \leq x \leq \beta(t_{i}), \\[0.5em]
f\left(t_i, \alpha(t_i), \frac{\alpha(t_i)-\sigma(t_{i}, z)}{h_{i-1}}, \right)  + \frac{\alpha(t_{i})-x}{ \alpha(t_{i})-x + 1}, & \text{if } x < \alpha(t_i),
\end{cases}
\end{equation}
where,
\begin{equation}
\label{eq:sigma}
\sigma(t_{i}, z) = \begin{cases}
 \beta(t_{i-1}), & \text{if } z > \beta(t_{i-1}),\\
z, & \text{if } \alpha(t_{i-1}) \leq z \leq \beta(t_{i-1}), \\
\alpha(t_{i-1}), & \text{if } z < \alpha(t_{i-1}).
\end{cases}
\end{equation}
\begin{figure}[h]
\setlength{\unitlength}{.6cm}
\begin{center}
\begin{tikzpicture}
    \draw[->] (-3,0) -- (3,0); 
    \draw[->] (0,-2) -- (0,2); 

    \draw[thick, domain=-3:3, smooth] plot (\x, {1.5*exp(-0.3*\x*\x)*sin(180*\x/2)});
    
    \draw[dashed] (-3,1) -- (3,1) node[right] {\small $\beta(t_i)$};
    \draw[dashed] (-3,-1) -- (3,-1) node[right] {\small $\alpha(t_i)$};

    \node[right] at (2,.5) {\small $f(t_i)$};

\end{tikzpicture}
\end{center}
\end{figure}
\begin{figure}[h]
\setlength{\unitlength}{.6cm}
\begin{center}
\begin{tikzpicture}
    \draw[->] (-3,0) -- (3,0); 
    \draw[->] (0,-2) -- (0,2); 

    \draw[dashed] (-3,1) -- (3,1) node[right] {\small $\beta(t_i)$};
    \draw[dashed] (-3,-1) -- (3,-1) node[right] {\small $\alpha(t_i)$};

    \draw[thick, domain=-3:3, smooth, samples=100] 
        plot (\x, {max(-1, min(1.5*exp(-0.3*\x*\x)*sin(180*\x/2), 1))});
    
    \node[right] at (2,0.5) {\small $\Tilde{f}(t_i)$};

\end{tikzpicture}
\end{center}
\end{figure}
\begin{pro}
The auxiliary function $\widetilde{f}$ is continuous and bounded. Moreover, there exists a constant $M > 0$ such that $|\widetilde{f}| \leq M$.
\end{pro}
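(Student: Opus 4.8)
The plan is to fix the index $i \in \{1, \ldots, n+1\}$ and treat $\widetilde{f}(t_i, \cdot, \cdot)$ as a function of the two real variables $x$ and $z$; since there are only finitely many such $i$, continuity for each fixed $i$ gives continuity overall, and the uniform bound $M$ is obtained by maximizing the individual bounds over these finitely many indices. I would first record that the clamping map $z \mapsto \sigma(t_i, z)$ of \eqref{eq:sigma} is continuous: its three branches agree at the breakpoints $z = \alpha(t_{i-1})$ and $z = \beta(t_{i-1})$, so $\sigma(t_i, \cdot)$ is a standard continuous truncation onto $[\alpha(t_{i-1}), \beta(t_{i-1})]$.

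Granting this, each of the three branches of $\widetilde{f}$ is continuous on the interior of its region. The middle branch is a composition of the continuous $f$ (by \textbf{(A1)}) with the continuous map $\sigma$. On the outer branches the first argument of $f$ is frozen at $\beta(t_i)$ or $\alpha(t_i)$, the second argument varies continuously through $\sigma$, and the rational correction terms $\frac{x - \beta(t_i)}{x - \beta(t_i) + 1}$ and $\frac{\alpha(t_i) - x}{\alpha(t_i) - x + 1}$ have denominators bounded away from zero on $x > \beta(t_i)$ and $x < \alpha(t_i)$ respectively. The only genuine check is continuity across the interfaces $x = \beta(t_i)$ and $x = \alpha(t_i)$, and here the correction terms are engineered to vanish: at $x = \beta(t_i)$ the numerator $x - \beta(t_i)$ is zero, so the top branch collapses to $f\!\left(t_i, \beta(t_i), \frac{\beta(t_i) - \sigma(t_i, z)}{h_{i-1}}\right)$, which is exactly the middle-branch value there; the analogous computation at $x = \alpha(t_i)$ matches the bottom and middle branches. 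Hence $\widetilde{f}$ is continuous on all of $\R^2$.

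For boundedness, which I expect to be the main point, the key observation is that both the truncation $\sigma$ and the correction terms stay bounded no matter how large $|x|$ or $|z|$ grow. Since $\alpha(t_{i-1}) \leq \sigma(t_i, z) \leq \beta(t_{i-1})$ for every $z$, the first argument of $f$ in each branch lies in $[\alpha(t_i), \beta(t_i)]$ and the second argument $\frac{\,\cdot\, - \sigma(t_i, z)}{h_{i-1}}$ lies in the compact interval $\left[\frac{\alpha(t_i) - \beta(t_{i-1})}{h_{i-1}}, \frac{\beta(t_i) - \alpha(t_{i-1})}{h_{i-1}}\right]$. By \textbf{(A1)}, $f$ is continuous and therefore bounded on this compact set, say by $M_i$. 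Each correction term has the form $\frac{s}{s+1}$ with $s > 0$, so its absolute value is strictly less than $1$. Combining these, $|\widetilde{f}(t_i, \cdot, \cdot)| \leq M_i + 1$ on all of $\R^2$, and setting $M = 1 + \max_{1 \le i \le n+1} M_i$ finishes the argument. The one subtlety worth flagging is conceptual rather than computational: the apparent unboundedness in the variables $x$ and $z$ is entirely absorbed by the clamping built into $\sigma$ and the saturation of the rational corrections, so the estimate survives despite $x, z$ ranging over all of $\R$.
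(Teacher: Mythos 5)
Your proof is correct, and while the continuity half follows essentially the same branch-by-branch and interface-matching route as the paper, your boundedness argument is genuinely different and, in fact, stronger. The paper bounds $\widetilde{f}$ by restricting attention to the values $\bigl(t_i, u(t_i), u^{\Delta}(t_{i-1})\bigr)$ attained by a solution $u$ of the auxiliary problem \eqref{eq:aux_problem} and invoking the extreme value theorem on that trajectory; this is uncomfortable because the bound $M$ is needed to define the radius $r$ in the invariance-of-ball step \emph{before} any solution is known to exist, and because the operator $T$ must be estimated on arbitrary $u \in \overline{B_r}$, not only on solutions. Your argument sidesteps this entirely: you observe that the clamp $\sigma$ confines its output to $[\alpha(t_{i-1}), \beta(t_{i-1})]$, that the second slot of $f$ is confined to $[\alpha(t_i), \beta(t_i)]$ in every branch, that the third slot therefore ranges over a fixed compact interval, and that each correction term $\frac{s}{s+1}$ with $s>0$ is bounded by $1$; continuity of $f$ on the resulting compact set then yields a bound $M_i + 1$ valid for \emph{all} $(x,z) \in \R^2$, and the maximum over the finitely many indices $i$ gives a uniform $M$. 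This is the estimate the rest of the paper actually uses. Two small remarks: you twice say ``first argument'' of $f$ where you mean the second (the first is $t_i$), a harmless slip; and you correctly treat $\sigma(t_i,\cdot)$ as a continuous truncation, whereas the paper's proof describes it as ``piecewise constant in $z$,'' which is inaccurate since the middle branch is the identity --- your reading is the right one, and continuity of $\sigma$ is what makes the middle branch of $\widetilde{f}$ jointly continuous in $(x,z)$.
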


\begin{proof}
We prove continuity and boundedness separately.

\textbf{Part 1: Continuity}

To prove $\widetilde{f}$ is continuous, we need to:
\begin{enumerate}
\item Show continuity within each piece
\item Show continuity at each breakpoint: $x = \alpha(t_i)$ and $x = \beta(t_i)$
\end{enumerate}

\textit{Step 1: Continuity within each piece}

When $x > \beta(t_i)$
$$\widetilde{f} = f\left(t_i, \beta(t_i), \frac{\beta(t_i) - \sigma(t_i, z)}{h_{i-1}}\right) - \frac{x - \beta(t_i)}{x - \beta(t_i) + 1}$$
Since $f$ is continuous by assumption {\bf(A1)}, $\sigma(t_i, z)$ is piecewise constant in $z$, and $x > \beta(t_i)$ implies $x - \beta(t_i) + 1 > 1 \neq 0$ implies this piece is continuous.

When $\alpha(t_i) \leq x \leq \beta(t_i)$
$$\widetilde{f} = f\left(t_i, x, \frac{x - \sigma(t_i, z)}{h_{i-1}}\right)$$
This is continuous by assumption {\bf(A1)} since $f$ is continuous.

When $x < \alpha(t_i)$
$$\widetilde{f} = f\left(t_i, \alpha(t_i), \frac{\alpha(t_i) - \sigma(t_i, z)}{h_{i-1}}\right) + \frac{\alpha(t_i) - x}{\alpha(t_i) - x + 1}$$
Since $x < \alpha(t_i)$ implies $\alpha(t_i) - x + 1 > 1 \neq 0$, and $f$ is continuous implies this piece is continuous.

\textit{Step 2: Continuity at $x = \beta(t_i)$ and at $x = \alpha(t_i)$}

We need to show:
$$\lim_{x \to \beta(t_i)^-} \widetilde{f} = \lim_{x \to \beta(t_i)^+} \widetilde{f} = \widetilde{f}(\beta(t_i))$$

\noindent {Left-hand limit:}
$$\lim_{x \to \beta(t_i)^-} \widetilde{f} = f\left(t_i, \beta(t_i), \frac{\beta(t_i) - \sigma(t_i, z)}{h_{i-1}}\right)$$

\noindent {Right-hand limit:} As $x \to \beta(t_i)^+$, we have $x - \beta(t_i) \to 0^+$, so $\frac{x - \beta(t_i)}{x - \beta(t_i) + 1} \to 0$. Therefore:
$$\lim_{x \to \beta(t_i)^+} \widetilde{f} = f\left(t_i, \beta(t_i), \frac{\beta(t_i) - \sigma(t_i, z)}{h_{i-1}}\right)$$

\noindent {Function value at the break point:}
$$\widetilde{f}(\beta(t_i)) = f\left(t_i, \beta(t_i), \frac{\beta(t_i) - \sigma(t_i, z)}{h_{i-1}}\right)$$

All three values are equal, so $\widetilde{f}$ is continuous at $x = \beta(t_i)$. Similarly, $\widetilde{f}$ is continuous at $x = \alpha(t_i)$.

\textbf{Part 2: Boundedness}
Let $\mathcal{D}$ be the domain of $\widetilde{f}$. Let us understand this domain carefully. Since $\widetilde{f}$ is a function of three variables, and each variable is a real number, clearly $\mathcal{D} \subset \mathbb{R}^3$. The first argument of $\widetilde{f}$ is $t_i$, where $t_i \in \mathbb{T}=\{t_0,t_1, \ldots, t_{n+2}\}$. $\mathbb{T}$ is a set of isolated points from $\mathbb{R}$, hence closed and bounded. Next comes the second and third argument of $\widetilde{f}$, which is $u(t_i)$. Note that if $u$ is a solution of Problem \eqref{eq:aux_problem}, then $u$ must be twice differentiable function in discrete sense, in other words, both $u(t_i)$ and $u^{\Delta}(t_i)$ is continuous on $\mathbb{T}$. Therefore, by extreme value theorem $u$ and $u^{\Delta}$ both reaches their maximum and minimum on the closed set $\mathbb{T}$. Consequently, $\mathcal{D} \subset K \subset \mathbb{R}^3$, where $K$ is compact. Finally, continuity of $\widetilde{f}$ implies its boundedness on $K$, hence on $\mathcal{D}$. In particular, there exists a constant $M > 0$ such that $|\widetilde{f}| \leq M$ for any $t_i \in \mathbb{T}$.
\end{proof}

Therefore, we study the auxiliary problem below:
\begin{equation}\label{eq:aux_problem}
\begin{cases}
u^{\Delta\Delta}(t_{i-1}) + \widetilde{f}(t_i, u(t_i), u^\Delta(t_{i-1})) = 0, & i \in \{1, 2, \ldots, n+1\} \\
u^\Delta(t_0) = 0 \\
u(t_{n+2}) = g(t_{n+2})
\end{cases}
\end{equation}
\noindent \textbf{Claim:} If $\alpha(t_i) \leq u(t_i) \leq \beta(t_i)$ for all $i \in \{ 0, 1, \ldots, n+2\}$, then Auxiliary Problem \eqref{eq:aux_problem} is equivalent to Main Problem \eqref{eq:main_problem}.
\begin{proof}
Note that, when we plug in $z = u(t_{i-1})$ in the expression of $\sigma(t_i, z)$ given in \eqref{eq:sigma}, we get
$$\sigma(t_i, u(t_{i-1})) = \begin{cases}
\beta(t_{i-1}) & \text{if } u(t_{i-1}) > \beta(t_{i-1}) \\
u(t_{i-1}) & \text{if } \alpha(t_{i-1}) \leq u(t_{i-1}) \leq \beta(t_{i-1}) \\
\alpha(t_{i-1}) & \text{if } u(t_{i-1}) < \alpha(t_{i-1})
\end{cases}$$

which implies,
$$\sigma(t_i, u(t_{i-1})) = u(t_{i-1}) \quad \text{if } \alpha(t_{i-1}) \leq u(t_{i-1}) \leq \beta(t_{i-1}).$$

Therefore, if $\alpha(t_i) \leq u(t_i) \leq \beta(t_i)$ for all $i \in \{0, 1, \ldots, n+2\}$, then we get
\begin{align*}
\widetilde{f}(t_i, u(t_i), u^{\Delta}(t_{i-1})) &= \widetilde{f}\left(t_i, u(t_i), \frac{u(t_{i}) - u(t_{i-1})}{t_{i} - t_{i-1}}\right) \\
&= {f}\left(t_i, u(t_i), \frac{u(t_i) - \sigma(t_i, u(t_{i-1}))}{t_i - t_{i-1}}\right) \\
&= f\left(t_i, u(t_i), \frac{u(t_i) - u(t_{i-1})}{t_i - t_{i-1}}\right)\\
&={f}(t_i, u(t_i), u^{\Delta}(t_{i-1})),
\end{align*}
which establishes the claim.
\end{proof}

Consequently, if we can find a solution $u(t_i)$ of Problem \eqref{eq:aux_problem} such that $\alpha(t_i) \leq u(t_i) \leq \beta(t_i)$ for all $i \in \{0, 1, \ldots, n+2\}$, then we can conclude that the solution $u(t_i)$ is in fact a solution of Problem \eqref{eq:main_problem}

\subsection{Construction of the Operator $T$}
With the auxiliary problem established, we now construct the solution operator $T$ whose fixed points are in fact  solutions of Problem \eqref{eq:main_problem}. In order to do that, we establish the following two lemmas which are essential for our purposes.

\begin{lem}
\label{lem:delta_formula}
If $u$ satisfies \eqref{eq:main_problem}, then
\begin{equation*}
u^\Delta(t_k) = -\sum_{i=1}^{k} h_{i-1} \widetilde{f}(t_i, u(t_i), u^\Delta(t_{i-1}))
\end{equation*}
for $k = 1, 2, \ldots, n+1$.
\end{lem}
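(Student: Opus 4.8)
The plan is to convert the second-order difference equation into a first-order telescoping relation for $u^\Delta$ and then sum it. Since the statement references $\widetilde{f}$, I read it as applying to a function $u$ satisfying the auxiliary equation \eqref{eq:aux_problem} (equivalently, \eqref{eq:main_problem} with $f$ replaced by $\widetilde{f}$); the argument is identical in either case and uses only the structure of the equation together with the first boundary condition.

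First I would unfold the definition of the second delta derivative at the node $t_{i-1}$. By Definition \ref{def:second_derivative},
\[
u^{\Delta\Delta}(t_{i-1}) = \frac{u^\Delta(t_i) - u^\Delta(t_{i-1})}{h_{i-1}},
\]
so the relation $u^{\Delta\Delta}(t_{i-1}) + \widetilde{f}(t_i, u(t_i), u^\Delta(t_{i-1})) = 0$ rearranges into the first-order increment
\[
u^\Delta(t_i) - u^\Delta(t_{i-1}) = -\,h_{i-1}\,\widetilde{f}\bigl(t_i, u(t_i), u^\Delta(t_{i-1})\bigr)
\]
valid for each $i \in \{1, \ldots, n+1\}$.

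Next I would sum this identity over $i = 1, \ldots, k$ for a fixed $k \in \{1, \ldots, n+1\}$. The left-hand side telescopes to $u^\Delta(t_k) - u^\Delta(t_0)$, while the right-hand side is exactly $-\sum_{i=1}^{k} h_{i-1} \widetilde{f}(t_i, u(t_i), u^\Delta(t_{i-1}))$. Finally, invoking the boundary condition $u^\Delta(t_0) = 0$ eliminates the boundary term on the left and produces the claimed formula.

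This is essentially a routine telescoping computation, so I do not anticipate a substantial obstacle. The only point requiring genuine care is the index bookkeeping: one must track the shift between the argument $t_{i-1}$ at which the second derivative is evaluated and the node $t_i$ that indexes $\widetilde{f}$, so that the telescoping cancellation of consecutive $u^\Delta$ terms lines up correctly and the summation range stays within the admissible indices $1 \le i \le n+1$.
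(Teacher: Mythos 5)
Your proposal is correct and matches the paper's argument in substance: both rearrange the equation into the first-order increment $u^\Delta(t_i) - u^\Delta(t_{i-1}) = -h_{i-1}\widetilde{f}(t_i, u(t_i), u^\Delta(t_{i-1}))$ and accumulate it using $u^\Delta(t_0)=0$; the paper packages the accumulation as an explicit induction with several base cases, while you present it as a single telescoping sum, which is the same computation more compactly stated. Your side remark that the lemma, as written, should refer to the auxiliary problem \eqref{eq:aux_problem} rather than \eqref{eq:main_problem} (since the conclusion involves $\widetilde{f}$) is also a correct reading of the authors' intent.
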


\begin{proof}
Here we utilize proof by induction. First, let us show the base cases. We start with $k = 1$:
\begin{align}
u^{\Delta\Delta}(t_0) &= \frac{u^\Delta(t_1) - u^\Delta(t_0)}{h_0} \notag\\
 \Rightarrow  -\widetilde{f}(t_1, u(t_1), u^\Delta(t_0)) &= \frac{u^\Delta(t_1) - 0}{h_0}\notag \\
 \Rightarrow u^\Delta(t_1) &= -h_0 \widetilde{f}(t_1, u(t_1), u^\Delta(t_0)) \notag
\end{align}
Next, let us check for ${u^\Delta(t_2)} \colon$
\begin{align}
    u^{\Delta\Delta}(t_1) &= \frac{u^\Delta(t_2)-u^\Delta(t_1)}{h_1} \notag \\
 \Rightarrow   -h_1\widetilde{f}(t_2, u(t_2), u^\Delta(t_1))&=u^\Delta(t_2)-u^\Delta(t_1)\notag \\
  \Rightarrow  u^\Delta(t_2)&=-h_1\widetilde{f}(t_2, u(t_2), u^\Delta(t_1))-h_0\widetilde{f}(t_1, u(t_1), u^\Delta(t_0)) \notag\\
  & = -\displaystyle \sum_{i=1}^{2} h_{i-1}\widetilde{f}(t_i, u(t_i), u^\Delta(t_{i-1})) \notag
\end{align}
Then, let us check for the the case $k=3\colon$
\begin{align*}
    u^{\Delta\Delta}(t_2) &= \frac{u^\Delta(t_3)-u^\Delta(t_2)}{h_2} \\
   \Rightarrow -h_2\widetilde{f}(t_3, u(t_3), u^\Delta(t_2))&=u^\Delta(t_3)-u^\Delta(t_2) \\
    \Rightarrow u^\Delta(t_3)&=-h_2\widetilde{f}(t_3, u(t_3), u^\Delta(t_2))-h_1\widetilde{f}(t_2, u(t_2), u^\Delta(t_1))-\\
    &h_0\widetilde{f}(t_1, u(t_1), u^\Delta(t_0))\\
    &= - \displaystyle \sum_{i=1}^3 h_{i-1}\widetilde{f}(t_i, u(t_i), u^{\Delta}(t_{i-1}))
\end{align*}
Checking the pattern, let us assume the formula holds for $k$, and 
\begin{equation*}
    u^\Delta(t_k)=-\displaystyle \sum_{i=1}^{k}{h_{i-1}\widetilde{f}(t_i, u(t_i), u^\Delta(t_{i-1}))}
\end{equation*}
Thereafter, we will show this formula holds for $t_{k+1}$.
\begin{align*}
u^{\Delta\Delta}(t_k) &= \frac{u^\Delta(t_{k+1}) - u^\Delta(t_k)}{h_k}\\
\Rightarrow -\widetilde{f}(t_{k+1}, u(t_{k+1}), u^\Delta(t_k)) & =\frac{u^\Delta(t_{k+1}) - u^\Delta(t_k)}{h_k}\\
u^\Delta(t_{k+1}) &= u^\Delta(t_k) - h_k\widetilde{f}(t_{k+1}, u(t_{k+1}), u^\Delta(t_k))\\
& = -\displaystyle \sum_{i=1}^{k}{h_{i-1}\widetilde{f}(t_i, u(t_i), u^\Delta(t_{i-1}))}- h_k\widetilde{f}(t_{k+1}, u(t_{k+1}), u^\Delta(t_k))\\
& = -\displaystyle \sum_{i=1}^{k+1}{h_{i-1}\widetilde{f}(t_i, u(t_i), u^\Delta(t_{i-1}))}
\end{align*}
From here, we can conclude, by mathematical induction, 
\begin{equation}
\label{eq:uDelta}
u^\Delta(t_k) = -\sum_{i=1}^{k} h_{i-1} \widetilde{f}(t_i, u(t_i), u^\Delta(t_{i-1}))
\end{equation}
for $k = 1, 2, \ldots, n+1$. This completes the proof.
\end{proof}

\begin{lem}\label{lem:solution_formula}
If $u$ satisfies \eqref{eq:main_problem}, then 
\begin{equation}
\label{eq:u-k}
    u(t_{k})=g(t_{n+2})+\displaystyle\sum_{j=k}^{n+1}{h_j\displaystyle\sum_{i=1}^{j}{h_{i-1}\widetilde{f}(t_i, u(t_i), u^\Delta(t_{i-1}))}}
\end{equation}
\end{lem}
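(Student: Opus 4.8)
The plan is to reconstruct $u(t_k)$ from its delta derivative by a telescoping argument, using the right-hand boundary condition $u(t_{n+2}) = g(t_{n+2})$ from \eqref{eq:main_problem} as the anchor point and then inserting the closed form for $u^\Delta(t_j)$ already obtained in Lemma \ref{lem:delta_formula}. No induction is needed this time, since the telescoping collapses the sum directly.

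First I would recall that by Definition \ref{def:delta_derivative} we have $u^\Delta(t_j) = \bigl(u(t_{j+1}) - u(t_j)\bigr)/h_j$, so that $u(t_{j+1}) - u(t_j) = h_j\, u^\Delta(t_j)$ for each admissible $j$. Summing this identity over $j = k, k+1, \ldots, n+1$ telescopes to $u(t_{n+2}) - u(t_k) = \sum_{j=k}^{n+1} h_j\, u^\Delta(t_j)$. Rearranging and substituting the boundary condition $u(t_{n+2}) = g(t_{n+2})$ then yields $u(t_k) = g(t_{n+2}) - \sum_{j=k}^{n+1} h_j\, u^\Delta(t_j)$.

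Next I would plug in the expression $u^\Delta(t_j) = -\sum_{i=1}^{j} h_{i-1}\, \widetilde{f}(t_i, u(t_i), u^\Delta(t_{i-1}))$ supplied by Lemma \ref{lem:delta_formula}. The two negative signs cancel, producing precisely $u(t_k) = g(t_{n+2}) + \sum_{j=k}^{n+1} h_j \sum_{i=1}^{j} h_{i-1}\, \widetilde{f}(t_i, u(t_i), u^\Delta(t_{i-1}))$, which is the claimed identity \eqref{eq:u-k}.

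I do not anticipate a genuine obstacle here; the only point requiring care is the bookkeeping of the summation indices in the telescoping step, making sure the range $j = k, \ldots, n+1$ correctly produces the difference $u(t_{n+2}) - u(t_k)$. One small consistency check worth recording is the edge case $k = 0$: Lemma \ref{lem:delta_formula} is stated for $j \geq 1$, but the $j = 0$ term contributes $h_0\, u^\Delta(t_0) = 0$ by the left boundary condition, which matches the empty inner sum $\sum_{i=1}^{0}$ appearing in the final formula, so the identity extends consistently to all $k \in \{0, 1, \ldots, n+1\}$ (and trivially to $k = n+2$, where the outer sum is empty and the formula reduces to the boundary condition).
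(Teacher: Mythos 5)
Your proof is correct. You use the same key ingredient as the paper --- the closed form $u^\Delta(t_j) = -\sum_{i=1}^{j} h_{i-1}\widetilde{f}(t_i, u(t_i), u^\Delta(t_{i-1}))$ from Lemma \ref{lem:delta_formula} --- but you resolve the first-order recursion $u(t_{j+1}) - u(t_j) = h_j u^\Delta(t_j)$ in one stroke by telescoping from $t_k$ up to the anchored endpoint $t_{n+2}$, whereas the paper unrolls the same recursion step by step via backward induction (with base cases at $k = n+2, n+1, n, n-1$ and an inductive descent from $l+1$ to $l$). The two arguments perform the identical computation, but your telescoping presentation is more compact and avoids the bookkeeping of an induction hypothesis. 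A further point in your favor is the explicit treatment of the edge cases: you check that the $j=0$ term $h_0 u^\Delta(t_0)$ vanishes by the left boundary condition, consistently with the empty inner sum $\sum_{i=1}^{0}$, so the formula is valid for $k=0$ even though Lemma \ref{lem:delta_formula} is only stated for $k \geq 1$; the paper's induction does not explicitly address the final descent to $k=0$, so your check closes a small gap in the exposition. The $k = n+2$ case (empty outer sum reducing to the boundary condition) matches the paper's base case exactly.
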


\begin{proof}
To prove this lemma, we employ reverse mathematical induction. We first establish the base cases for $k = n+2$, $k = n+1$, $k = n$ and $k=n-1$. Then, assuming the statement holds for $k = l+1$, we prove it must also hold for $k = l$, thereby establishing the result for all values in our domain by backward mathematical induction.

For $k=n+2$, using the boundary condition in Eq. \eqref{eq:main_problem}, we obtain
\begin{align}
\label{eq:k=n+2}
    u(t_{n+2}) &= g(t_{n+2})\notag\\
    &= g(t_{n+2}) + 0\notag \\
    &= g(t_{n+2}) +\displaystyle\sum_{j=n+2}^{n+1}{h_j\displaystyle\sum_{i=1}^{j}{h_{i-1}\widetilde{f}(t_i, u(t_i), u^\Delta(t_{i-1}))}}
\end{align}
Next, for $k = n+1$, using Eq.s \eqref{eq:uDelta} and \eqref{eq:k=n+2}, we obtain
\begin{align} 
\label{eq:k=n+1}
    u^{\Delta}(t_n+1) &= \frac{u(t_{n+2})-u(t_{n+1})}{h_{n+1}}\notag \\
    -\displaystyle\sum_{i=1}^{n+1}{h_{i-1}\widetilde{f}(t_i, u(t_i), u^\Delta(t_{i-1}))} &= \frac{u(t_{n+2})-u(t_{n+1})}{h_{n+1}} \notag\\
    \Rightarrow u(t_{n+1}) &= u(t_{n+2}) + h_{n+1}\displaystyle\sum_{i=1}^{n+1}{h_{i-1}\widetilde{f}(t_i, u(t_i), u^\Delta(t_{i-1}))}\notag \\
    \Rightarrow u(t_{n+1}) &= g(t_{n+2}) +\displaystyle\sum_{j=n+1}^{n+1}{h_j\displaystyle\sum_{i=1}^{j}{h_{i-1}\widetilde{f}(t_i, u(t_i), u^\Delta(t_{i-1}))}}
\end{align}
Then, for $k = n$, using Eq.s \eqref{eq:uDelta} and \eqref{eq:k=n+1}, we obtain
\begin{align}
\label{eq:k=n}
    u^{\Delta}(t_n) &= \frac{u(t_{n+1})-u(t_{n})}{h_{n}}\notag \\
    -\displaystyle\sum_{i=1}^{n}{h_{i-1}\widetilde{f}(t_i, u(t_i), u^\Delta(t_{i-1}))} &= \frac{u(t_{n+1})-u(t_{n})}{h_{n}} \notag \\
    \Rightarrow u(t_{n}) &= u(t_{n+1}) + h_{n}\displaystyle\sum_{i=1}^{n}{h_{i-1}\widetilde{f}(t_i, u(t_i), u^\Delta(t_{i-1}))}\notag \\
    &= g(t_{n+2}) +\displaystyle\sum_{j=n+1}^{n+1}{h_j\displaystyle\sum_{i=1}^{j}{h_{i-1}\widetilde{f}(t_i, u(t_i), u^\Delta(t_{i-1}))}}\notag\\
    &+ h_{n}\displaystyle\sum_{i=1}^{n}{h_{i-1}\widetilde{f}(t_i, u(t_i), u^\Delta(t_{i-1}))}\notag\\
    \Rightarrow u(t_{n}) &= g(t_{n+2}) +\displaystyle\sum_{j=n}^{n+1}{h_j\displaystyle\sum_{i=1}^{j}{h_{i-1}\widetilde{f}(t_i, u(t_i), u^\Delta(t_{i-1}))}}
\end{align}
Thereafter, for $k = n-1$, using Eq.s \eqref{eq:uDelta} and \eqref{eq:k=n}, we obtain
\begin{align}
\label{eq:k=n-1}
    u^{\Delta}(t_{n-1}) &= \frac{u(t_{n})-u(t_{n-1})}{h_{n-1}}\notag \\
    -\displaystyle\sum_{i=1}^{n-1}{h_{i-1}\widetilde{f}(t_i, u(t_i), u^\Delta(t_{i-1}))} &= \frac{u(t_{n})-u(t_{n-1})}{h_{n-1}}\notag\\
    \Rightarrow u(t_{n-1}) &= u(t_{n}) + h_{n-1}\displaystyle\sum_{i=1}^{n-1}{h_{i-1}\widetilde{f}(t_i, u(t_i), u^\Delta(t_{i-1}))} \notag \\
    &= g(t_{n+2}) +\displaystyle\sum_{j=n}^{n+1}{h_j\displaystyle\sum_{i=1}^{j}{h_{i-1}\widetilde{f}(t_i, u(t_i), u^\Delta(t_{i-1}))}} \notag \\
    &+ h_{n-1}\displaystyle\sum_{i=1}^{n-1}{h_{i-1}\widetilde{f}(t_i, u(t_i), u^\Delta(t_{i-1}))} \notag \\
   \Rightarrow u(t_{n-1}) &= g(t_{n+2}) +\displaystyle\sum_{j=n-1}^{n+1}{h_j\displaystyle\sum_{i=1}^{j}{h_{i-1}\widetilde{f}(t_i, u(t_i), u^\Delta(t_{i-1}))}}
\end{align}
Finally, let us write the \textbf{Induction Hypothesis}. Assume the statement (Eq. \eqref{eq:u-k}) is true for $k=l+1$ i.e. 
\begin{equation}
\label{eq:k=l+1}
 u(t_{l+1}) = g(t_{n+2}) +\displaystyle\sum_{j=l+1}^{n+1}{h_j\displaystyle\sum_{i=1}^{j}{h_{i-1}\widetilde{f}(t_i, u(t_i), u^\Delta(t_{i-1}))}}   
\end{equation}
We are to prove, Eq. \eqref{eq:u-k} is true for $k=l$. From Eq.s \eqref{eq:uDelta} and \eqref{eq:k=l+1}, we obtain
\begin{align}
\label{eq:k=l}
    u^{\Delta}(t_l) &= \frac{u(t_{l+1})-u(t_{l})}{h_{l}}\notag \\
    -\displaystyle\sum_{i=1}^{l}{h_{i-1}\widetilde{f}(t_i, u(t_i), u^\Delta(t_{i-1}))} &= \frac{u(t_{l+1})-u(t_{l})}{h_{l}}\notag \\
     \Rightarrow u(t_{l}) &= u(t_{l+1}) + h_{l}\displaystyle\sum_{i=1}^{n}{h_{i-1}\widetilde{f}(t_i, u(t_i), u^\Delta(t_{i-1}))} \notag \\
    &= g(t_{n+2}) +\displaystyle\sum_{j=l+1}^{n+1}{h_j\displaystyle\sum_{i=1}^{j}{h_{i-1}\widetilde{f}(t_i, u(t_i), u^\Delta(t_{i-1}))}}\notag \\
    &+ h_l\displaystyle\sum_{i=1}^{n}{h_{i-1}\widetilde{f}(t_i, u(t_i), u^\Delta(t_{i-1}))}\notag \\
    \Rightarrow u(t_{l}) &= g(t_{n+2}) +\displaystyle\sum_{j=l}^{n+1}{h_j\displaystyle\sum_{i=1}^{j}{h_{i-1}\widetilde{f}(t_i, u(t_i), u^\Delta(t_{i-1}))}}
\end{align}
Consequently, Eq.s \eqref{eq:k=n+2}-\eqref{eq:k=l} successfully establish the lemma.
\end{proof}

\noindent Now we define the solution operator $T: E \to E$ as follows.
\begin{equation}
\label{eq:T}
T(u(t_{k})) := g(t_{n+2}) +\displaystyle\sum_{j=k}^{n+1}{h_j\displaystyle\sum_{i=1}^{j}{h_{i-1}\widetilde{f}(t_i, u(t_i), u^\Delta(t_{i-1}))}}; \quad  k \in \{0, 1, 2, ..., n+2\}
\end{equation}
\noindent Observe that, from Lemma \ref{lem:solution_formula}, if $u$ is a solution of \eqref{eq:aux_problem}, then it is a fixed point of the operator $T$ , i.e. $u(t_k) = T(u(t_k))$ for $k = 0,1, \ldots, n+2$.

\noindent \textbf{Claim}: If $u$ is a fixed point of operator $T$, then $u$ satisfy \eqref{eq:aux_problem} in other words if $T(u(t_k)) = u(t_k)$ then $u(t_k)$ satisfy \eqref{eq:aux_problem}.

\begin{proof}
We will prove our claim in the following three steps. \\   
\noindent \textbf{Step 1:} { \it First we show that $u(t_{n+2}) = g(t_{n+2})$.} Using Eq. \eqref{eq:u-k}, for $k=n+2$ we obtain, 
\begin{align}
\label{eq:revu-n+2}
    u(t_{n+2}) &= g(t_{n+2}) +\displaystyle\sum_{j=n+2}^{n+1}{h_j\displaystyle\sum_{i=1}^{j}{h_{i-1}\widetilde{f}(t_i, u(t_i), u^\Delta(t_{i-1}))}}\notag \\
    \Rightarrow u(t_{n+2}) &= g(t_{n+2}) + 0 \notag \\
    \Rightarrow u(t_{n+2}) &= g(t_{n+2})
\end{align}

\noindent \textbf{Step 2:} {\it Next step we show that $u^{\Delta}(t_0) = 0$}. For our purposes, we first compute $u^\Delta(t_k)$. Recall that, $u^\Delta(t_k) = \frac{u(t_{k+1}) - u(t_k)}{h_k}$. Now using Eq. \eqref{eq:u-k}, we have,

\begin{align}
\label{eq:revu-Delta}
    u^\Delta(t_k) =& \frac{g(t_{n+2}) +\displaystyle\sum_{j=k+1}^{n+1}{h_j\displaystyle\sum_{i=1}^{j}{h_{i-1}\widetilde{f}(t_i, u(t_i), u^\Delta(t_{i-1}))}} - g(t_{n+2}) -\displaystyle\sum_{j=k}^{n+1}{h_j\displaystyle\sum_{i=1}^{j}{h_{i-1}\widetilde{f}(t_i, u(t_i), u^\Delta(t_{i-1}))}}}{h_k} \notag \\
    \Rightarrow u^\Delta(t_k) &= \frac{\displaystyle\sum_{j=k+1}^{n+1}{h_j\displaystyle\sum_{i=1}^{j}{h_{i-1}\widetilde{f}(t_i, u(t_i), u^\Delta(t_{i-1}))}} - \displaystyle\sum_{j=k}^{n+1}{h_j\displaystyle\sum_{i=1}^{j}{h_{i-1}\widetilde{f}(t_i, u(t_i), u^\Delta(t_{i-1}))}}}{h_k}\notag \\
    \Rightarrow u^\Delta(t_k) &= \frac{-h_k\displaystyle\sum_{i=1}^{k}{h_{i-1}}\widetilde{f}(t_i, u(t_i), u^\Delta(t_{i-1}))}{h_k}\notag \\
    \Rightarrow u^\Delta(t_k) &= -\displaystyle\sum_{i=1}^{k}{h_{i-1}}\widetilde{f}(t_i, u(t_i), u^\Delta(t_{i-1}))
\end{align}
Therefore by plugging in $k=0$ in Eq. \eqref{eq:revu-Delta}, we obtain,
\begin{align}
\label{eq:revu-0}
     u^\Delta(t_0) &= -\displaystyle\sum_{i=1}^{0}{h_{i-1}}\widetilde{f}(t_i, u(t_i), u^\Delta(t_{i-1})) \notag \\
    \Rightarrow u^\Delta(t_0) &= 0
\end{align}

\noindent \textbf{Step 3:} {\it Finally, we show that, $u^{\Delta\Delta}(t_{k-1})=-\widetilde{f}(t_k, u(t_k), u^\Delta(t_{k-1}))$ for $k = 1, 2, \ldots, n+1$}. Again recall that for $k = 1, 2, \ldots, n+1$, we have, $u^{\Delta\Delta}(t_k) = \frac{u^\Delta(t_{k+1}) - u^\Delta(t_k)}{h_k}$. 
Now using \eqref{eq:revu-Delta}, we obtain
\begin{align}
\label{eq:revu-DeltaDelta}
    u^{\Delta\Delta}(t_{k-1}) &= \frac{-\displaystyle\sum_{i=1}^{k}{h_{i-1}}\widetilde{f}(t_i, u(t_i), u^\Delta(t_{i-1})) +\displaystyle\sum_{i=1}^{k-1}{h_{i-1}}\widetilde{f}(t_i, u(t_i), u^\Delta(t_{i-1}))}{h_{k-1}}\notag \\
   \Rightarrow u^{\Delta\Delta}(t_{k-1}) &= \frac{-h_{k-1}\widetilde{f}(t_k, u(t_k), u^\Delta(t_{k-1}))}{h_{k-1}} \notag \\
    \Rightarrow u^{\Delta\Delta}(t_{k-1}) &= -\widetilde{f}(t_k, u(t_k), u^\Delta(t_{k-1}))
\end{align}
Consequently, Eq.s \eqref{eq:revu-0}, \eqref{eq:revu-n+2} and \eqref{eq:revu-DeltaDelta} altogether establish the claim.
\end{proof}
Thereupon, one can conclude that $u$ is a solution of \eqref{eq:aux_problem} if and only if $u$ is a fixed point of the operator $T$ given by Eq. \eqref{eq:T}.

\subsection{Fixed Point of Operator $T$}
Having constructed the solution operator $T: E \to E$ in the previous section, we now verify the properties necessary for applying Brouwer's Fixed Point Theorem.

\begin{lem}[Continuity of Operator $T$]
The operator $T: E \to E$ defined by \eqref{eq:T} is continuous.
\end{lem}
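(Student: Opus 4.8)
The plan is to exploit the finite-dimensionality of $E$ together with the already-established continuity of $\widetilde{f}$, thereby reducing the continuity of $T$ to continuity in each of the finitely many coordinates. Concretely, I would take a sequence $u_m \to u$ in $(E, \|\cdot\|_\infty)$ and show $T(u_m) \to T(u)$; since $\mathbb{T}$ has only $n+3$ points, it suffices to prove $T(u_m)(t_k) \to T(u)(t_k)$ for each fixed $k$, and then conclude convergence in the sup norm by taking the maximum over the finitely many indices.

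First I would record that the two ``input'' maps feeding into $\widetilde{f}$ are continuous with respect to $\|\cdot\|_\infty$. The evaluation map $u \mapsto u(t_i)$ is immediately continuous since $|u_m(t_i) - u(t_i)| \le \|u_m - u\|_\infty$. For the delta-derivative argument, using Definition \ref{def:delta_derivative} and the uniform lower bound $h_{i-1} \ge h_{\min} > 0$ from Definition \ref{def:stepsize}, I would estimate
\begin{equation*}
|u_m^\Delta(t_{i-1}) - u^\Delta(t_{i-1})| = \frac{|(u_m - u)(t_i) - (u_m - u)(t_{i-1})|}{h_{i-1}} \le \frac{2}{h_{\min}}\|u_m - u\|_\infty,
\end{equation*}
so that both $u_m(t_i) \to u(t_i)$ and $u_m^\Delta(t_{i-1}) \to u^\Delta(t_{i-1})$ as $m \to \infty$.

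Next I would invoke the continuity of $\widetilde{f}$ established in the previous proposition: since $(u_m(t_i), u_m^\Delta(t_{i-1})) \to (u(t_i), u^\Delta(t_{i-1}))$ in $\mathbb{R}^2$, continuity of $\widetilde{f}(t_i, \cdot, \cdot)$ yields $\widetilde{f}(t_i, u_m(t_i), u_m^\Delta(t_{i-1})) \to \widetilde{f}(t_i, u(t_i), u^\Delta(t_{i-1}))$ for each $i \in \{1, \ldots, n+1\}$. Because $T(u)(t_k)$ is, by its definition in \eqref{eq:T}, a finite double sum of such terms weighted by the fixed constants $h_j$ and $h_{i-1}$ plus the constant $g(t_{n+2})$, and finite linear combinations preserve convergence, it follows that $T(u_m)(t_k) \to T(u)(t_k)$ for every $k$.

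Finally, taking the maximum over $k \in \{0, \ldots, n+2\}$ gives $\|T(u_m) - T(u)\|_\infty \to 0$, establishing continuity. I do not expect a genuine obstacle here; the only point requiring care is the control of the delta-derivative argument by the sup norm, which is exactly what the uniform positivity of $h_{\min}$ provides. After that, continuity of $T$ is simply the composition and finite summation of continuous maps on a finite grid. A fully equivalent alternative would be a direct $\varepsilon$--$\delta$ argument, but the sequential formulation is cleanest given the finite-dimensional setting.
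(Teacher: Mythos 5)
Your proposal is correct and takes essentially the same route as the paper, which simply observes that $T$ is a finite composition and summation of continuous functions (the continuity of $\widetilde{f}$ having been established earlier). Your version merely fills in the details the paper leaves implicit, most usefully the bound $|u_m^\Delta(t_{i-1}) - u^\Delta(t_{i-1})| \le \tfrac{2}{h_{\min}}\|u_m - u\|_\infty$ controlling the delta-derivative argument by the sup norm.
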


\begin{proof}
For each $k \in \{ 0,1, \ldots, n+2\}$, the function $T(u(t_{k-1}))$ is defined as
\begin{equation}
T(u(t_{k})) = g(t_{n+2}) + \sum_{j=k}^{n+1} h_j \sum_{i=1}^j h_{i-1} \widetilde{f}(t_i, u(t_i), u^\Delta(t_{i-1})).
\end{equation}

Since auxiliary function $\widetilde{f}$ is continuous, $g$ is continuous by assumption {\bf (A3)}, the operator $T$ is continuous as a finite composition of continuous functions.
\end{proof}

\begin{pro}[Invariance of Ball]
Let $\overline{B_r} = \{u \in E : \|u\|_\infty \leq r\}$ where $\|u\|_\infty = \max_{t \in \mathbb{T}} |u(t)|$. If 
\begin{equation}
r \geq g(t_{n+2}) + M(t_{n+2} - t_0)^2,
\end{equation}
where $M$ is the upperbound of $\widetilde{f}$,
then $T(\overline{B_r}) \subseteq \overline{B_r}$.
\end{pro}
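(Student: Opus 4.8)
The plan is to show directly that the supremum norm of $Tu$ does not exceed $r$ whenever $\|u\|_\infty \le r$, by estimating $|T(u(t_k))|$ uniformly in $k$. First I would fix $u \in \overline{B_r}$ and an index $k \in \{0,1,\ldots,n+2\}$, and apply the triangle inequality to the defining formula \eqref{eq:T}. Since $g$ maps into $\R^+$, so $g(t_{n+2}) \ge 0$, and $|\widetilde{f}| \le M$ by the preceding proposition, this yields
\begin{equation*}
|T(u(t_k))| \le g(t_{n+2}) + M \sum_{j=k}^{n+1} h_j \sum_{i=1}^{j} h_{i-1}.
\end{equation*}

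The next step is to evaluate the double sum by exploiting the telescoping nature of the step sizes. The inner sum telescopes as $\sum_{i=1}^{j} h_{i-1} = h_0 + h_1 + \cdots + h_{j-1} = t_j - t_0$, and likewise $\sum_{j=k}^{n+1} h_j = t_{n+2} - t_k$. Bounding each factor $t_j - t_0$ by the full length $t_{n+2} - t_0$ and $t_{n+2} - t_k$ by the same quantity, I obtain
\begin{equation*}
\sum_{j=k}^{n+1} h_j (t_j - t_0) \le (t_{n+2} - t_0) \sum_{j=k}^{n+1} h_j = (t_{n+2} - t_0)(t_{n+2} - t_k) \le (t_{n+2} - t_0)^2.
\end{equation*}

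Combining the two displays gives $|T(u(t_k))| \le g(t_{n+2}) + M(t_{n+2} - t_0)^2 \le r$ by the hypothesis on $r$. Since this bound is independent of $k$, taking the maximum over $k \in \{0,1,\ldots,n+2\}$ yields $\|Tu\|_\infty \le r$, so $Tu \in \overline{B_r}$ and therefore $T(\overline{B_r}) \subseteq \overline{B_r}$. I would also note for completeness that $Tu$ genuinely lies in $E$: the empty sum at $k = n+2$ forces $(Tu)(t_{n+2}) = g(t_{n+2})$, and the computation in \eqref{eq:revu-Delta} (which depends only on the form of \eqref{eq:T}, not on $u$ being a fixed point) gives $(Tu)^\Delta(t_0) = 0$.

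The argument is essentially a clean chain of estimates, so I do not anticipate a genuine obstacle; the only point requiring care is the bookkeeping of the nested sums and the telescoping identities for the $h_i$, together with the observation that the constant in the hypothesis on $r$ is tuned precisely so that the worst-case bound closes. In particular, the estimate is uniform in $u \in \overline{B_r}$ because the bound $|\widetilde{f}| \le M$ holds globally, which is exactly why passing to the auxiliary nonlinearity $\widetilde{f}$ rather than working with $f$ directly is what makes invariance of the ball available.
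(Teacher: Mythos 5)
Your proposal is correct and follows essentially the same route as the paper's proof: triangle inequality, the global bound $|\widetilde{f}|\leq M$, and telescoping of the step sizes to bound the double sum by $(t_{n+2}-t_0)^2$. The only cosmetic difference is that you evaluate the telescoping sums exactly before bounding them (and add the worthwhile but minor check that $Tu\in E$), whereas the paper bounds each sum by the full-range sum first; the estimates are identical in substance.
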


\begin{proof}
Let $u \in \overline{B_r}$, so $\|u\|_\infty \leq r$. We must show that $\|Tu\|_\infty \leq r$.

For any $k \in \{0, 1, \ldots, n+2\}$, we have
\begin{align*}
|T(u(t_k))| &= \left|g(t_{n+2}) + \sum_{j=k}^{n+1} h_j \sum_{i=1}^j h_{i-1} \widetilde{f}(t_i, u(t_i), u^\Delta(t_{i-1}))\right| \\
&\leq |g(t_{n+2})| + \left|\sum_{j=k}^{n+1} h_j \sum_{i=1}^j h_{i-1} \widetilde{f}(t_i, u(t_i), u^\Delta(t_{i-1}))\right| \\
&\leq g(t_{n+2}) + \sum_{j=k}^{n+1} h_j \sum_{i=1}^j h_{i-1} |\widetilde{f}(t_i, u(t_i), u^\Delta(t_{i-1}))| \\
&\leq g(t_{n+2}) + M \sum_{j=k}^{n+1} h_j \sum_{i=1}^j h_{i-1},
\end{align*}
where the last inequality follows from the boundedness of $\widetilde{f}$. 
Since $j \leq n+1$,
\begin{align*}
\sum_{i=1}^{j} h_{i-1} &\leq \sum_{i=1}^{n+1} h_{i-1} \\
&= \sum_{i=1}^{n+1} (t_i - t_{i-1}) \\
&= t_1 - t_0 + t_2 - t_1 + \cdots + t_{n+1} - t_n \\
&= (t_{n+1} - t_0) < (t_{n+2} - t_0),
\end{align*}
and since $k \geq 0$,
\begin{align*}
\sum_{j=k}^{n+1} h_j &\leq \sum_{j=0}^{n+1} h_j \\
&= \sum_{j=0}^{n+1} (t_{j+1} - t_j) \\
&= t_1 - t_0 + t_2 - t_1 + \cdots + t_{n+2} - t_{n+1} \\
&= (t_{n+2} - t_0).
\end{align*}
Therefore,
$|T(u(t_k))| < g(t_{n+2}) + M(t_{n+2} - t_0)^2$.
By our choice of $r$, this gives $|T(u(t_{k}))| \leq r$ for all $k \in \{0, 1, \ldots, n+2\}$. Consequently, $\|Tu\|_\infty = \max_{k} |T(u(t_{k}))| \leq r$, which implies $Tu \in \overline{B_r}$.
\end{proof}
Therefore, employing Brouwer's fixed point theorem (see Corollary \ref{cor:Brouwer}) one can conclude that $T$ has a fixed point in $\overline{B(r)}$. In other words, there exists $u \in \overline{B(r)}$, such that $T(u^*(t_k)) = u^*(t_k)$ for all $k \in \{0,1, \ldots, n+2\}$.
\begin{rem}
    {\rm Note that in the two dimensional setting, we can think of $\overline{B(r)}$ as a circle of radius $r$, whereas in three dimensional setting, as a sphere with radius $r$.}
\end{rem}

\section{Verification of Bounds}
\label{sec:bound}

The final step shows any solution $u$ satisfies $\alpha(t_i) \leq u(t_i) \leq \beta(t_i)$ for all $i \in \{0, 1, \ldots, n+2\}$. We prove this by contradiction in two parts.

\subsection{Proof that $u(t_i) \leq \beta(t_i)$}

Suppose there exists some index $\ell \in \{0, 1, \ldots, n+2\}$ such that $u(t_\ell) > \beta(t_\ell)$. Define the maximum violation
\begin{equation}
M = \max\{u(t_i) - \beta(t_i) : i \in \{0, 1, \ldots, n+2\}\} > 0.
\end{equation}
Let $\ell$ be an index where this maximum is achieved, so $u(t_\ell) - \beta(t_\ell) = M > 0$.

We consider three cases for the location of $\ell$.

\textbf{Case 1:} $\ell = 0$. From the boundary condition $u^\Delta(t_0) = 0$ and the upper solution condition $\beta^\Delta(t_0) \leq 0$, we have
\begin{equation*}
u^\Delta(t_0) - \beta^\Delta(t_0) = 0 - \beta^\Delta(t_0) \geq 0.
\end{equation*}
This implies
\begin{equation*}
\frac{u(t_1) - u(t_0)}{h_0} \geq \frac{\beta(t_1) - \beta(t_0)}{h_0},
\end{equation*}
which gives us $u(t_1) - \beta(t_1) \geq u(t_0) - \beta(t_0) = M$. Since $M$ is the maximum violation, we must have $u(t_1) - \beta(t_1) = M$, meaning the maximum is also achieved at $t_1$. We can therefore consider $\ell = 1$ instead.

\textbf{Case 2:} $\ell = n+2$. From the boundary conditions $u(t_{n+2}) = g(t_{n+2})$ and $\beta(t_{n+2}) \geq g(t_{n+2})$, we obtain
\begin{equation*}
u(t_{n+2}) - \beta(t_{n+2}) = g(t_{n+2}) - \beta(t_{n+2}) \leq 0.
\end{equation*}
This contradicts our assumption that $u(t_{n+2}) - \beta(t_{n+2}) = M > 0$.

\textbf{Case 3:} $\ell \in \{1, 2, \ldots, n+1\}$. Since $\ell$ achieves the maximum of $u(t_i) - \beta(t_i)$, we have $u(t_i) - \beta(t_i) \leq u(t_\ell) - \beta(t_\ell) = M$ for all $i$. Since $\ell$ is a maximum point of the function $w(t_i) = u(t_i) - \beta(t_i)$, the discrete second derivative satisfies
\begin{equation}
\label{eq:max-cond}
(u - \beta)^{\Delta\Delta}(t_{\ell-1}) \leq 0.
\end{equation}
This follows from the discrete analogue of the second derivative test.

From the auxiliary problem \eqref{eq:aux_problem}, $u$ satisfies
\begin{equation*}
u^{\Delta\Delta}(t_{\ell-1}) + \widetilde{f}(t_\ell, u(t_\ell), u^\Delta(t_{\ell-1})) = 0.
\end{equation*}
and from the definition of upper solution outlined in \eqref{def:upper_solution}, we have
\begin{equation*}
\beta^{\Delta\Delta}(t_{\ell-1}) + f(t_\ell, \beta(t_\ell), \beta^\Delta(t_{\ell-1})) \leq 0.
\end{equation*}

Since $u(t_\ell) > \beta(t_\ell)$, by plugging in $x=u(t_{\ell})$ and $z=u(t_{\ell-1})$ in the definition of $\widetilde{f}$ outlined in equation \eqref{eq:tilde-f}, we have 
\begin{align}
\label{eq:tildef-beta}
\widetilde{f}(t_\ell, u(t_\ell), u^\Delta(t_{\ell-1})) = & \widetilde{f}\left(t_\ell, u(t_\ell),\frac{u(t_{\ell})-u(t_{\ell-1})}{h_{\ell -1}}\right) \notag \\
\Rightarrow \widetilde{f}(t_\ell, u(t_\ell), u^\Delta(t_{\ell-1}))= & f\left(t_{\ell}, \beta(t_{\ell}), \frac{\beta(t_{\ell})-\sigma(t_{\ell}, u(t_{\ell-1}))}{h_{\ell-1}}\right) - \frac{u(t_{\ell})-\beta(t_{\ell})}{u(t_{\ell})-\beta(t_{\ell}) + 1} \notag \\
\Rightarrow -\widetilde{f}(t_\ell, u(t_\ell), u^\Delta(t_{\ell-1}))= & -f\left(t_{\ell}, \beta(t_{\ell}), \frac{\beta(t_{\ell})-\sigma(t_{\ell}, u(t_{\ell-1}))}{h_{\ell-1}}\right) + \frac{u(t_{\ell})-\beta(t_{\ell})}{u(t_{\ell})-\beta(t_{\ell}) + 1}.
\end{align}
Now, from the definition of $\sigma$ given in Eq. \eqref{eq:sigma},
\begin{equation*}
\sigma(t_{\ell}, u(t_{\ell-1})) = \begin{cases}
 \beta(t_{\ell-1}), & \text{if } u(t_{\ell-1}) > \beta(t_{\ell-1}),\\
u(t_{\ell-1}), & \text{if } \alpha(t_{\ell-1}) \leq u(t_{\ell-1}) \leq \beta(t_{\ell-1}), \\
\alpha(t_{\ell-1}), & \text{if } u(t_{\ell-1}) < \alpha(t_{\ell-1}),
\end{cases}
\end{equation*}
which implies $\sigma(t_{\ell}, u(t_{\ell-1})) \leq \beta(t_{\ell-1})$. Therefore, using the assumption {\bf(A2)}: $f$ is non-increasing in its third variable we obtain, 
\begin{equation}
\label{eq:f-beta}
    -f\left(t_{\ell}, \beta(t_{\ell}), \frac{\beta(t_{\ell})-\sigma(t_{\ell}, u(t_{\ell-1}))}{h_{\ell-1}}\right) \geq - f(t_\ell, \beta(t_\ell), \beta^\Delta(t_{\ell-1}))
\end{equation}
Therefore, combining Eq.s \eqref{eq:aux_problem}, \eqref{def:upper_solution}, \eqref{eq:tildef-beta} and \eqref{eq:f-beta}, we obtain, 
\begin{equation}
\label{eq:u-contr}
u^{\Delta\Delta}(t_{\ell-1}) - \beta^{\Delta\Delta}(t_{\ell-1}) \geq \frac{u(t_\ell) - \beta(t_\ell)}{u(t_\ell) - \beta(t_\ell) + 1} > 0.    
\end{equation}
which clearly contradicts Eq. \eqref{eq:max-cond}. This completes the proof.

\subsection{Proof that $\alpha(t_i) \leq u(t_i)$} Proof of this part will follow the similar argument presented above by considering the set $\{\alpha(t_i) - u(t_i) : i \in \{0, 1, \ldots, n+2\}\}$.

Consequently, we have, $\alpha(t_i) \leq u(t_i) \leq \beta(t_i)$ for all $i \in \{0, 1, \ldots, n+2\}$.

\end{document}